\newtheorem{thm}{Theorem}[section]
\newtheorem{prop}[thm]{Proposition}
\theoremstyle{definition}
\theoremstyle{remark}
\newtheorem{rem}{Remark}[section]
\numberwithin{equation}{section}
\renewcommand{\Re}{\hbox{Re}\,}
\renewcommand{\Im}{\hbox{Im}\,}
\newcommand{\C}{\mathbb{C}}
\renewcommand{\div}{\operatorname{div}}
\newcommand{\R}{\mathbb{R}}
\def\hat{\widehat}
\def\tilde{\widetilde}
\def \bfo {\begin {eqnarray*} }
\def \efo {\end {eqnarray*} }
\def \ba {\begin {eqnarray*} }
\def \ea {\end {eqnarray*} }
\def \beq {\begin {eqnarray}}
\def \eeq {\end {eqnarray}}
\def \p {\partial}
\def\hat{\widehat}
\def\tilde{\widetilde}
\def \bfo {\begin {eqnarray*} }
\def \efo {\end {eqnarray*} }
\def \ba {\begin {eqnarray*} }
\def \ea {\end {eqnarray*} }
\def \beq {\begin {eqnarray}}
\def \eeq {\end {eqnarray}}
\def \p {\partial}
\begin{document}

 \title[Inverse problems for polyharmonic operator ]{Inverse boundary value problems for the perturbed polyharmonic operator}

\author[Krupchyk]{Katsiaryna Krupchyk}

\address
        {K. Krupchyk, Department of Mathematics and Statistics \\
         University of Helsinki\\
         P.O. Box 68 \\
         FI-00014   Helsinki\\
         Finland}

\email{katya.krupchyk@helsinki.fi}

\author[Lassas]{Matti Lassas}

\address
        {M. Lassas, Department of Mathematics and Statistics \\
         University of Helsinki\\
         P.O. Box 68 \\
         FI-00014   Helsinki\\
         Finland}

\email{matti.lassas@helsinki.fi}

\author[Uhlmann]{Gunther Uhlmann}

\address
       {G. Uhlmann, Department of Mathematics\\
       University of Washington\\
       Seattle, WA  98195-4350\\
       and
       Department of Mathematics\\ 
       340 Rowland Hall\\
        University of California\\
        Irvine, CA 92697-3875\\
       USA}
\email{gunther@math.washington.edu}

\maketitle

\begin{abstract} 

We show that a first order perturbation $A(x)\cdot D+q(x)$Ê of the polyharmonic operator $(-\Delta)^m$, $m\ge 2$, can be determined uniquely from the set of the Cauchy data for the perturbed polyharmonic operator on a bounded domain in $\R^n$, $n\ge 3$.  Notice that the corresponding result does not hold in general when $m=1$.

\end{abstract}

\section{Introduction and statement of results}

Let $\Omega\subset \R^n$, $n\ge 3$,  be a bounded domain with $C^\infty$-boundary, and consider the polyharmonic operator $(-\Delta)^m$, $m\ge 1$, which is 
a positive self-adjoint operator on $L^2(\Omega)$ with the domain 
\[
H^{2m}(\Omega)\cap H^{m}_0(\Omega),\quad H^m_0(\Omega)=\{u\in H^{m}(\Omega):\gamma u=0\}.
\]
Here 
\[
\gamma u=(u|_{\p\Omega},\p_{\nu}u|_{\p \Omega},\dots,\p_{\nu}^{m-1}u|_{\p \Omega})
\]
is the Dirichlet trace of $u$, $\nu$ is the unit outer normal to the boundary $\p \Omega$, and $H^s(\Omega)$ is the standard Sobolev space on $\Omega$, $s\in\R$.

Consider the operator 
\[
\mathcal{L}_{A,q}(x,D)=(-\Delta)^m+\sum_{j=1}^n A_j(x)D_j+q(x)=(-\Delta)^m+A(x)\cdot D+q(x),
\]
with $A=(A_j)_{1\le j\le n}\in W^{1,\infty}(\Omega,\C^n)$ and $q\in L^\infty(\Omega,\C)$. Here $D=i^{-1}\nabla$. 
Viewed as an unbounded operator on $L^2(\Omega)$ and equipped with the domain $H^{2m}(\Omega)\cap H^{m}_0(\Omega)$, 
the operator $\mathcal{L}_{A,q}$  is closed with purely discrete spectrum. 

Let us make the following assumption,
\begin{itemize}
\item[\textbf{(A)}] $0$ is not an eigenvalue of $\mathcal{L}_{A,q}(x,D):H^{2m}(\Omega)\cap H^{m}_0(\Omega)\to L^2(\Omega)$. 
\end{itemize}
Under the assumption (A), for any $f=(f_0,f_1,\dots,f_{m-1})\in 
\mathcal{H}^{0,m-1}(\p \Omega):= \prod_{j=0}^{m-1}H^{2m-j-1/2}(\p \Omega)$,  
the Dirichlet problem 
\begin{equation}
\label{eq_Dirichlet_problem}
\begin{aligned}
\mathcal{L}_{A,q} u&=0\quad \textrm{in} \quad \Omega, \\
\gamma u&=f\quad \textrm{on} \quad \p \Omega,
\end{aligned}
\end{equation}
has a unique solution $u\in H^{2m}(\Omega)$. Introducing the Neumann trace operator $\tilde\gamma$ by 
\begin{align*}
&\tilde \gamma: H^{2m}(\Omega)\to \mathcal{H}^{m,2m-1}(\p \Omega):= \prod_{j=m}^{2m-1}H^{2m-j-1/2}(\p \Omega),\\
&\tilde \gamma u=(\p_{\nu}^{m}u|_{\p \Omega},\dots,\p_{\nu}^{2m-1}u|_{\p \Omega}),
\end{align*} 
see \cite{Grubbbook2009}, 
 we define the Dirichlet--to--Neumann map $\mathcal{N}_{A,q}=\mathcal{N}_{A,q}^{\Omega}$ by
\[
\mathcal{N}_{A,q}: \mathcal{H}^{0,m-1}(\p \Omega)\to \mathcal{H}^{m,2m-1}(\p \Omega), \quad \mathcal{N}_{A,q}(f)=\tilde \gamma u,
\]
where $u\in H^{2m}(\Omega)$ is the solution to the Dirichlet problem  \eqref{eq_Dirichlet_problem}.  For future references, 
let us also introduce the set of the Cauchy data $\mathcal{C}_{A,q}=\mathcal{C}_{A,q}^{\Omega}$ for the operator $\mathcal{L}_{A,q}$ defined as follows,
\[
\mathcal{C}_{A,q}=\{(\gamma u,\tilde \gamma u): u\in H^{2m}(\Omega), \mathcal{L}_{A,q}u=0\ \textrm{in }\Omega\}. 
\]
When the assumption (A) holds,  the set $\mathcal{C}_{A,q}$ is the graph of the Dirichlet--to--Neumann map $\mathcal{N}_{A,q}$.

In this paper we are concerned with  the inverse problem of recovering  the first order perturbation $A(x)\cdot D+ q(x)$ in $\Omega$  from the knowledge of the Dirichlet--to--Neumann map $\mathcal{N}_{A,q}$ on the boundary of $\Omega$. 

As it  was noticed in \cite{Sun_1993}, when $m=1$, we have 
\[
e^{-i\psi}\mathcal{L}_{A,q}e^{i\psi}=\mathcal{L}_{\tilde A, \tilde q},\quad e^{i\psi}\mathcal{N}_{ \tilde A, \tilde q} e^{-i\psi}=\mathcal{N}_{A,q} -i\p_{\nu}\psi,
\]
\begin{equation}
\label{eq_A_introduction}
  \tilde A=A+2\nabla\psi,\quad  \tilde q=q+A\cdot \nabla \psi + |\nabla \psi|^2-i\Delta\psi. 
\end{equation}
Thus, $\mathcal{N}_{A, q} =\mathcal{N}_{\tilde A,\tilde q}$ when $\psi\in C^2(\overline \Omega)$ is such that $\psi|_{\p\Omega}=\p_{\nu}\psi|_{\p \Omega}=0$. We may therefore  hope to recover the coefficients 
$A$ and $q$ from boundary measurements only modulo a gauge transformation in \eqref{eq_A_introduction}.

Starting with the pioneering paper \cite{Sun_1993}, inverse boundary value problems for first order perturbations of the Laplacian have been extensively studied, usually in the context of magnetic Schr\"odinger operators. 
In \cite{Sun_1993} it was shown that  the hope mentioned above is justified, and the magnetic field and the electric potential are uniquely determined by the Dirichlet--to-Neumann map, provided that the magnetic field is small
in a suitable sense. The smallness condition was removed in \cite{NakSunUlm_1995} in the case of $C^\infty$-smooth magnetic and electric potentials, and also for $C^2$-compactly supported magnetic and $L^\infty$ 
electric potentials, see also \cite{ChNS_2001}.  The regularity assumptions on the potentials were subsequently weakened in the works \cite{Salo_diss, Tolmasky_1998}. 
 The case of partial boundary measurements for the 
magnetic Schr\"odinger operator was studied in \cite{DKSU_2007, KnuSalo_2007}.

The purpose of this paper is to show that the obstruction to uniqueness coming from the gauge invariance \eqref{eq_A_introduction} when $m=1$,   can be eliminated by passing to operators of higher order, and the unique determination of the coefficients $A$ and $q$ becomes then possible. Throughout the paper we shall assume therefore that  $m\ge 2$.  Our first result is as follows. 

\begin{thm}
\label{thm_main} Let $\Omega\subset \R^n$, $n\ge 3$,  be a bounded domain with  $C^\infty$-boundary, and let $A^{(1)}, A^{(2)}\in W^{1,\infty}(\R^n,\C^n)\cap \mathcal{E}'(\overline\Omega,\C^n)$ and $q^{(1)},q^{(2)}\in L^\infty(\Omega,\C)$, be such that the assumption (A) is satisfied for both operators. 
If $\mathcal{N}_{A^{(1)},q^{(1)}}=\mathcal{N}_{A^{(2)},q^{(2)}}$, then $A^{(1)}=A^{(2)}$ and $q^{(1)}=q^{(2)}$ in $\Omega$.  
\end{thm}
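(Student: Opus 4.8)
The plan is to follow the by-now classical Sylvester--Uhlmann strategy, adapted to the polyharmonic setting. The key analytic tool is the construction of complex geometric optics (CGO) solutions. Specifically, extend $A^{(j)}$ and $q^{(j)}$ by zero outside $\Omega$ and work in a large ball $B\supset\overline\Omega$; for $\zeta\in\C^n$ with $\zeta\cdot\zeta=0$ and $|\zeta|$ large, I seek solutions of $\mathcal{L}_{A^{(j)},q^{(j)}}u=0$ in $B$ of the form $u=e^{x\cdot\zeta}(a(x)+r(x))$, where $a$ is a smooth amplitude chosen to cancel the leading transport term and $r$ is a correction with $\|r\|_{L^2}=o(1)$ (with appropriate semiclassical gain) as $|\zeta|\to\infty$. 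The principal symbol of $(-\Delta)^m$ conjugated by $e^{x\cdot\zeta}$ degenerates on the characteristic set, so the right functional framework is the semiclassical setting with a parameter $h\sim|\zeta|^{-1}$, and one needs a Carleman estimate / solvability result for $(-h^2\Delta)^m$ conjugated by $e^{\varphi/h}$ with a limiting Carleman weight $\varphi=x\cdot\xi$ — this is essentially an $m$-fold application of the second-order estimate, since $(-h^2\Delta)^m$ factors through the second-order conjugated Laplacian.

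Second, I would derive the integral identity. With $u_1$ a CGO solution for $\mathcal{L}_{A^{(1)},q^{(1)}}$ and $u_2$ a CGO solution for the formal transpose $\mathcal{L}_{\overline{A^{(2)}},\overline{q^{(2)}}}^*$ (so that $v_2=\overline{u_2}$), the hypothesis $\mathcal{N}_{A^{(1)},q^{(1)}}=\mathcal{N}_{A^{(2)},q^{(2)}}$ together with an integration-by-parts (Green's formula for the polyharmonic operator, using that all Cauchy data up to order $2m-1$ agree) yields
\begin{equation}
\label{eq_integral_identity}
\int_{\Omega}\big((A^{(1)}-A^{(2)})\cdot D u_1\big)\,\overline{u_2}\,dx+\int_{\Omega}(q^{(1)}-q^{(2)})\,u_1\,\overline{u_2}\,dx=0.
\end{equation}
Write $A=A^{(1)}-A^{(2)}$ and $q=q^{(1)}-q^{(2)}$, both compactly supported in $\overline\Omega$. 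Choosing $\zeta_1=\zeta_1(h)$ and $\zeta_2=\zeta_2(h)$ with $\zeta_1+\zeta_2\to i\xi$ for a fixed $\xi\in\R^n$ (the standard two-plane construction, with $\xi$ and the two extra unit vectors mutually orthogonal), and inserting the CGO solutions into \eqref{eq_integral_identity}, the leading term as $h\to 0$ picks out the Fourier transform. The amplitude $a$ in the leading order can be taken equal to $1$ (no magnetic connection term is needed in the leading order for recovering $A$ first, unlike the $m=1$ case), and the $D u_1$ factor produces, to leading order, $e^{x\cdot(\zeta_1+\zeta_2)}\,\zeta_1\cdot A$. A first pass, after dividing by $|\zeta_1|$ and letting $h\to 0$, shows $\alpha\cdot\widehat{A}(\xi)=0$ for a suitable fixed direction $\alpha$ (the limit of $\zeta_1/|\zeta_1|$), and by rotating the construction (varying $\alpha$ over a set of directions spanning a hyperplane transverse to $\xi$, and using that $\xi\cdot\widehat{A}$ is controlled via $\widehat{\div A}$ which also appears) one upgrades this to $\widehat{A}(\xi)=0$ for all $\xi$, hence $A^{(1)}=A^{(2)}$. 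This is exactly where the higher order $m\ge 2$ helps: the gauge term $2\nabla\psi$ that obstructed the case $m=1$ is tied to the fact that for $m=1$ one can only recover $\curl A$; for $m\ge 2$ the structure of the identity lets one access $A$ itself, essentially because the "first order part" of $\mathcal{L}_{A,q}$ sits against a $2m$-th order principal part and the conjugation produces an extra factor of $\zeta$ that is not a total gradient in the relevant sense. Once $A^{(1)}=A^{(2)}$ is known, \eqref{eq_integral_identity} collapses to $\int_\Omega q\,u_1\overline{u_2}=0$, and the same CGO substitution gives $\widehat{q}(\xi)=0$ for all $\xi$, hence $q^{(1)}=q^{(2)}$.

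I expect the main obstacle to be the CGO construction and the accompanying solvability estimate for the conjugated polyharmonic operator: one must show that $e^{-x\cdot\zeta/h}\,(-h^2\Delta)^m\,e^{x\cdot\zeta/h}+(\text{lower order perturbations scaled appropriately})$ is solvable with good norm bounds, uniformly as $h\to 0$. The degeneracy of the symbol $(\xi^2+2i\xi\cdot(\text{Re}\,\zeta/|\zeta|)\,\cdot|\xi|\,\cdots)^m$ — really the $m$-th power of a symbol vanishing to first order on a codimension-two set — means the naive estimate loses powers of $h$, so one has to track constants through the $m$-fold iteration and verify that the perturbation $h^{2m-1}A\cdot D+h^{2m}q$ (after semiclassical rescaling) is genuinely lower order; the weights in $W^{1,\infty}$ for $A$ are exactly what is needed to keep the transport equation for the amplitude $a$ solvable and to absorb $\div A$ terms. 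A secondary technical point is the Green's formula leading to \eqref{eq_integral_identity}: one needs the full set of traces $\gamma u,\tilde\gamma u$ of orders $0,\dots,2m-1$ to match so that all boundary terms in the $m$-fold integration by parts cancel, which is why the Cauchy data (equivalently $\mathcal{N}_{A,q}$ as defined) is the right object. Everything else — the choice of $\zeta_1,\zeta_2$, extraction of the Fourier transform, the rotation argument to recover all Fourier modes of $A$ and then $q$ — is routine once these two ingredients are in place.
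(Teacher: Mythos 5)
Your proposal follows the same broad Sylvester--Uhlmann/Carleman framework as the paper (extension to a larger ball, CGO solutions built from the iterated Carleman estimate for $(-h^2\Delta)^m$, the Green's formula identity, and the two-phase choice of $\zeta_1,\zeta_2$). But there is a genuine gap in the step where you recover $A$.

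When you insert CGO solutions with amplitudes taken to leading order equal to $1$, multiply by $h$, and pass to the limit, you obtain exactly the relation
\[
(\mu_1+i\mu_2)\cdot\int_{B}(A^{(2)}-A^{(1)})\,e^{ix\cdot\xi}\,dx=0,
\quad \mu_1\perp\mu_2\perp\xi,\ |\mu_1|=|\mu_2|=1.
\]
Rotating $\mu_1,\mu_2$ over the hyperplane $\xi^{\perp}$ gives $\mu\cdot\widehat{A^{(2)}-A^{(1)}}(\xi)=0$ for \emph{all} $\mu\perp\xi$, but this says precisely that the curl of $A^{(2)}-A^{(1)}$ vanishes; the $\xi$--component of the Fourier transform is not touched. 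This is the same gauge obstruction that appears for $m=1$, and it does not disappear merely because the principal part is of order $2m$. Your remark that ``$\xi\cdot\widehat{A}$ is controlled via $\widehat{\operatorname{div} A}$ which also appears'' is not correct: with constant amplitudes $\operatorname{div}A$ does not enter the $O(1)$ term of the integral identity in a way that yields the missing component, so the rotation argument alone cannot upgrade the relation to $\widehat{A}(\xi)=0$.

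The point of $m\ge 2$ in the paper's argument is more specific than you indicate: since the transport equation for the amplitude is $(\zeta\cdot\nabla)^m a=0$ rather than $(\zeta\cdot\nabla)a=0$, one may choose $a_2$ solving the \emph{inhomogeneous} first--order equation $(\mu_1+i\mu_2)\cdot\nabla a_2=1$ (a $\bar\partial$-equation), which is still an admissible amplitude because $((\mu_1+i\mu_2)\cdot\nabla)^m a_2=0$. After first deducing $A^{(2)}-A^{(1)}=\nabla\varphi$ with $\varphi$ vanishing near the boundary of the large ball (using compact support), one plugs in $a_1=1$ and this non-constant $a_2$; integration by parts against $\nabla\varphi$, together with $\mu_j\cdot\xi=0$, gives $\int\varphi\,e^{ix\cdot\xi}\,dx=0$, hence $\varphi\equiv 0$ and $A^{(1)}=A^{(2)}$. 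Without this second pass with a non-constant amplitude your argument would only prove that the two vector potentials have the same curl, not that they are equal, so the theorem would not follow. The rest of your sketch (the $q$ step once $A^{(1)}=A^{(2)}$, the Carleman estimate, the Green's formula bookkeeping) is consistent with the paper.
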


The proof of Theorem \ref{thm_main} exploits complex geometric optics solutions for the equations $\mathcal{L}_{A^{(j)},q^{(j)}}u=0$, $j=1,2$.  Following \cite{DKSU_2007, KenSjUhl2007}, 
the construction of such solutions will be carried out using Carleman estimates.  We remark here that starting with the fundamental paper \cite{Syl_Uhl_1987}, complex geometric optics solutions have been a central tool in establishing uniqueness results in elliptic inverse boundary value problems.

Dropping  the assumption that $A^{(j)}=0$, $j=1,2$,  along $\p \Omega$ in Theorem \ref{thm_main}, we have the following result. 
\begin{thm}
\label{thm_main_2} 
 Let $\Omega\subset \R^n$, $n\ge 3$,  be a bounded domain with  $C^\infty$-boundary, and let $A^{(1)}, A^{(2)}\in C^{\infty}(\overline{\Omega},\C^n)$ and $q^{(1)},q^{(2)}\in C^\infty(\overline \Omega,\C)$, be such that the assumption (A) is satisfied  for both operators.
If $\mathcal{N}_{A^{(1)},q^{(1)}}=\mathcal{N}_{A^{(2)},q^{(2)}}$, then $A^{(1)}=A^{(2)}$ and $q^{(1)}=q^{(2)}$ in $\Omega$.  
\end{thm}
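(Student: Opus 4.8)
The plan is to reduce Theorem~\ref{thm_main_2} to Theorem~\ref{thm_main} by a boundary determination argument combined with a gauge‐type modification. The key observation is that the hypothesis $A^{(j)}\in C^\infty(\overline\Omega,\C^n)$ (rather than vanishing near $\p\Omega$) only costs us the values of $A^{(j)}$ and its derivatives on $\p\Omega$, so the first step is to prove a \emph{boundary determination} result: if $\mathcal{N}_{A^{(1)},q^{(1)}}=\mathcal{N}_{A^{(2)},q^{(2)}}$ then $\p^\alpha A^{(1)}=\p^\alpha A^{(2)}$ and $\p^\alpha q^{(1)}=\p^\alpha q^{(2)}$ on $\p\Omega$ for all multi-indices $\alpha$. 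I would do this in the usual Calder\'on-problem style, by constructing highly oscillatory solutions concentrated near a boundary point $x_0\in\p\Omega$ — take local boundary normal coordinates, insert a WKB ansatz $u\sim e^{i\lambda(x'\cdot\xi' + \cdots)}\chi(x)$ with $\chi$ supported in a small half-ball, test the equality $\langle(\mathcal{N}_{A^{(1)},q^{(1)}}-\mathcal{N}_{A^{(2)},q^{(2)}})\gamma u,\overline{\tilde\gamma v}\rangle=0$ against a similar solution $v$ — and read off successive terms in the asymptotic expansion in $\lambda$, which recover first $A\cdot\nu$, then the full tangential part of $A$, then $q$, then their normal derivatives, and so on by an induction on the order of the jet. (For the polyharmonic operator this requires being a little careful about which traces the pairing sees, but the principal symbol $|\xi|^{2m}$ is scalar and elliptic, so the structure is the same as the $m=1$ case.)

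Once the full boundary jets of $A^{(1)}-A^{(2)}$ and $q^{(1)}-q^{(2)}$ vanish, the second step is to extend all four coefficients by $0$ across $\p\Omega$ to a slightly larger smooth bounded domain $\widetilde\Omega\supset\overline\Omega$. Because all derivatives of $A^{(1)}$ and $A^{(2)}$ agree on $\p\Omega$, the extensions $\widetilde A^{(j)}$ are $W^{1,\infty}(\R^n,\C^n)$ (indeed $C^\infty$ up to the relevant order) and compactly supported in $\overline{\widetilde\Omega}$, and in fact $\widetilde A^{(1)}=\widetilde A^{(2)}$ on $\widetilde\Omega\setminus\Omega$; similarly for $\widetilde q^{(j)}$. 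The third step is the standard ``same Cauchy data on the bigger domain'' propagation: since the coefficients agree on the collar $\widetilde\Omega\setminus\overline\Omega$ and $\mathcal{C}^{\Omega}_{A^{(1)},q^{(1)}}=\mathcal{C}^{\Omega}_{A^{(2)},q^{(2)}}$, any solution of $\mathcal{L}_{\widetilde A^{(1)},\widetilde q^{(1)}}u=0$ on $\widetilde\Omega$ can be matched across $\p\Omega$ with a solution of $\mathcal{L}_{\widetilde A^{(2)},\widetilde q^{(2)}}$ — one solves the equation with coefficients $\widetilde A^{(2)},\widetilde q^{(2)}$ in $\Omega$ with the Cauchy data $(\gamma u,\tilde\gamma u)|_{\p\Omega}$ of the first solution, which lies in $\mathcal C_{A^{(2)},q^{(2)}}$ by hypothesis, glues to $u$ on the collar to a $H^{2m}(\widetilde\Omega)$ function killed by $\mathcal{L}_{\widetilde A^{(2)},\widetilde q^{(2)}}$ (here the $C^\infty$ matching of all the relevant derivatives is what guarantees no distributional defect supported on $\p\Omega$). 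Hence $\mathcal{C}^{\widetilde\Omega}_{\widetilde A^{(1)},\widetilde q^{(1)}}=\mathcal{C}^{\widetilde\Omega}_{\widetilde A^{(2)},\widetilde q^{(2)}}$ (after checking assumption (A) holds on $\widetilde\Omega$, which one arranges by shrinking the collar or perturbing, a point to be dealt with carefully), and now $\widetilde A^{(j)}\in W^{1,\infty}(\R^n,\C^n)\cap\mathcal E'(\overline{\widetilde\Omega},\C^n)$, so Theorem~\ref{thm_main} applies on $\widetilde\Omega$ and gives $\widetilde A^{(1)}=\widetilde A^{(2)}$ and $\widetilde q^{(1)}=\widetilde q^{(2)}$ on $\widetilde\Omega$, hence on $\Omega$.

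The main obstacle I expect is the boundary determination step: for the perturbed polyharmonic operator one must identify precisely which combinations of boundary jets of $A$ and $q$ are seen by the localized oscillating solutions and organize the asymptotic expansion so the induction closes. A secondary technical point, easy in spirit but needing care, is ensuring assumption~(A) on the enlarged domain $\widetilde\Omega$ — if $0$ happens to be an eigenvalue there one cannot directly speak of the Dirichlet–to–Neumann map, so one should phrase the gluing argument purely in terms of the Cauchy data sets $\mathcal C_{A,q}$, which are defined without (A), and only invoke the hypotheses of Theorem~\ref{thm_main} on $\widetilde\Omega$ by a small perturbation of the collar if necessary (the set of admissible enlargements is open and dense). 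Everything else — the extension, the matching of solutions across $\p\Omega$, the interior regularity — is routine given the ellipticity of $(-\Delta)^m$ and the fact that the lower-order coefficients now coincide near $\p\widetilde\Omega$.
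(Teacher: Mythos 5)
Your overall plan — boundary determination, extension to a larger domain, then invocation of Theorem~\ref{thm_main} via Proposition~\ref{prop_extension_larger_dom} — is the paper's strategy. But the execution of the extension step contains a genuine error: you propose to extend $A^{(1)}$ and $A^{(2)}$ \emph{by zero} across $\partial\Omega$ and claim the extensions are $W^{1,\infty}$ because the boundary \emph{jets} of the difference $A^{(1)}-A^{(2)}$ vanish. That does not follow. The extension-by-zero of $A^{(j)}$ is Lipschitz only if $A^{(j)}|_{\partial\Omega}=0$, and nothing in the hypotheses or in boundary determination gives you that — you only learn that $A^{(1)}$ and $A^{(2)}$ take the \emph{same} (generically nonzero) boundary values. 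The correct step, which is what the paper does, is to extend \emph{both} $A^{(j)}$ by a single common compactly supported $W^{1,\infty}$ vector field on the collar $\tilde\Omega\setminus\Omega$ whose trace on $\partial\Omega$ matches their (equal) boundary values. Then $\tilde A^{(1)}=\tilde A^{(2)}$ on $\tilde\Omega\setminus\Omega$ by construction, each $\tilde A^{(j)}$ is Lipschitz, and Proposition~\ref{prop_extension_larger_dom} applies. For $q^{(j)}$ extension by zero is fine, since $L^\infty$ tolerates a jump.

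Two further remarks. First, you are proving far more at the boundary than is needed or than the paper attempts: the paper's Proposition~\ref{prop_boundary_reconstruction} only recovers the values of $A$ on $\partial\Omega$ (not derivatives, and nothing at all about $q$), and that is exactly what the reduction requires once one extends as above. Whether the full jets of $A$ and the jets of $q$ can be read off the Dirichlet--to--Neumann map for the polyharmonic operator is a genuinely harder question that you do not need to answer here. Second, your method for boundary determination (WKB solutions concentrated near a boundary point) differs from the paper's, which factors the conjugated second-order system in boundary normal coordinates as a product of first-order pseudodifferential operators in the style of Lee--Uhlmann and reads off the symbol of the Dirichlet-to-Neumann map; either approach is in principle viable, but you leave yours entirely at the level of an ansatz. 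Your observation that assumption (A) need not hold on $\tilde\Omega$ and that the argument should be phrased in terms of the Cauchy data sets $\mathcal{C}_{A,q}$ is correct, and is indeed how the paper's proof of Theorem~\ref{thm_main} actually proceeds after the first reduction.
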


The idea of the proof of Theorem \ref{thm_main_2}  is to reduce it to Theorem \ref{thm_main}  by using the boundary reconstruction of the vector field part of the perturbation.  When doing the boundary reconstruction, similarly to  \cite{KKLM, LeeUhl89, NakSunUlm_1995},  we use pseudodifferential techniques, which motivates the need to require $C^\infty$ smoothness assumptions on the coefficients of the perturbed operator.

Finally, it may be interesting to notice that the boundary reconstruction becomes unnecessary when the boundary of the domain $\Omega$ is connected. We have the following result. 

\begin{thm}
\label{thm_main_3} Let $\Omega\subset \R^n$, $n\ge 3$,  be a bounded domain with  connected $C^\infty$--boundary, and let $A^{(1)}, A^{(2)}\in W^{1,\infty}(\Omega,\C^n)$ and $q^{(1)},q^{(2)}\in L^\infty(\Omega,\C)$, be such that the assumption (A) is satisfied  for both operators.
If $\mathcal{N}_{A^{(1)},q^{(1)}}=\mathcal{N}_{A^{(2)},q^{(2)}}$, then $A^{(1)}=A^{(2)}$ and $q^{(1)}=q^{(2)}$ in $\Omega$.  
\end{thm}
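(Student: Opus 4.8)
The plan is to reduce Theorem \ref{thm_main_3} to Theorem \ref{thm_main} by exploiting the freedom to extend the domain across its connected boundary. The key observation is that when $\partial\Omega$ is connected, a knowledge of $\mathcal{N}_{A,q}^{\Omega}$ together with a control of the Cauchy data propagation across a collar will allow us to arrange a situation in which the vector field parts of the two perturbations agree near the boundary, which is precisely the extra hypothesis needed in Theorem \ref{thm_main}. Concretely, I would first enlarge $\Omega$ to a slightly larger smooth bounded domain $\widetilde\Omega \supset \overline\Omega$, chosen so that $\widetilde\Omega \setminus \overline\Omega$ is connected (this uses connectedness of $\partial\Omega$). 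I would extend $A^{(j)}$ and $q^{(j)}$ by zero to $\widetilde\Omega$; since the extended coefficients need not be globally Lipschitz, a first technical step is to instead extend them to coefficients $\widetilde A^{(j)} \in W^{1,\infty}$ on $\widetilde\Omega$ that agree with $A^{(j)}$ on $\Omega$, and then note that it suffices to match them on the collar.

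The heart of the argument is the standard fact (going back to the work on the magnetic Schr\"odinger operator, adapted here to order $2m$) that equality of Dirichlet--to--Neumann maps on $\partial\Omega$ implies, in the region exterior to $\Omega$ inside $\widetilde\Omega$, that the two problems can be ``glued'': given a solution of $\mathcal{L}_{\widetilde A^{(1)},\widetilde q^{(1)}} u_1 = 0$ in $\widetilde\Omega$, one solves $\mathcal{L}_{\widetilde A^{(2)},\widetilde q^{(2)}} u_2 = 0$ in $\Omega$ with $\gamma u_2 = \gamma u_1$ on $\partial\Omega$, and the hypothesis $\mathcal{N}_{A^{(1)},q^{(1)}} = \mathcal{N}_{A^{(2)},q^{(2)}}$ forces $\widetilde\gamma u_2 = \widetilde\gamma u_1$; consequently the function equal to $u_1$ on $\widetilde\Omega\setminus\overline\Omega$ and to $u_2$ on $\Omega$ is an $H^{2m}$ solution of $\mathcal{L}_{\widetilde A^{(2)},\widetilde q^{(2)}} = 0$ across the interface. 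Running this construction with a rich enough family of solutions $u_1$ (complex geometric optics solutions of the ambient operator, as in the proof of Theorem \ref{thm_main}) and using unique continuation for $\mathcal{L}_{A^{(1)},q^{(1)}} - \mathcal{L}_{A^{(2)},q^{(2)}}$, a first-order operator on the connected collar $\widetilde\Omega\setminus\overline\Omega$ where both solutions solve the same ambient equation, yields $\widetilde A^{(1)} = \widetilde A^{(2)}$ and $\widetilde q^{(1)} = \widetilde q^{(2)}$ on that collar — in particular in a neighbourhood of $\partial\Omega$ inside $\Omega$, we may assume after modifying the extensions that $A^{(1)} = A^{(2)}$ near $\partial\Omega$.

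Once the vector fields agree near $\partial\Omega$, I would subtract a compactly supported gauge (or simply observe that the difference $A^{(1)} - A^{(2)}$ is compactly supported in $\Omega$) so that $A^{(1)} - A^{(2)} \in \mathcal{E}'(\overline\Omega,\C^n)$ and $A^{(1)}, A^{(2)}$ may be regarded as elements of $W^{1,\infty}(\R^n,\C^n) \cap \mathcal{E}'(\overline\Omega,\C^n)$ after a harmless redefinition outside a neighbourhood of the boundary; the Dirichlet--to--Neumann data are unchanged on $\partial\Omega$ by construction. At this point the hypotheses of Theorem \ref{thm_main} are met, and that theorem gives $A^{(1)} = A^{(2)}$ and $q^{(1)} = q^{(2)}$ in $\Omega$.

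The main obstacle I anticipate is the gluing/unique-continuation step on the collar: one must be careful that the reflected or extended solutions genuinely lie in $H^{2m}(\widetilde\Omega)$ (matching of all traces up to order $2m-1$ across $\partial\Omega$, which is exactly what the equality of full Cauchy data provides, and why the Dirichlet--to--Neumann map for a $2m$-th order operator carries the right amount of information), and that the family of ambient CGO solutions restricted to the collar is dense enough to conclude vanishing of a first-order perturbation there. The connectedness of $\widetilde\Omega \setminus \overline\Omega$ is essential for the unique continuation argument to propagate the information from near $\partial\widetilde\Omega$ (where we control the ambient operator) to all of the collar; without connectedness of $\partial\Omega$ one could not arrange the exterior region to be connected, which is precisely the point at which Theorem \ref{thm_main_3} needs its extra hypothesis relative to Theorem \ref{thm_main_2}.
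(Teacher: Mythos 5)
Your approach has a genuine gap at the gluing step, and the role you assign to the connectedness hypothesis is not the one the argument actually uses.

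The reduction to Theorem~\ref{thm_main} via Proposition~\ref{prop_extension_larger_dom} requires $A^{(1)}=A^{(2)}$ (and $q^{(1)}=q^{(2)}$) on the collar $\widetilde\Omega\setminus\Omega$ \emph{before} you can conclude $\mathcal{C}_{A^{(1)},q^{(1)}}^{\widetilde\Omega}=\mathcal{C}_{A^{(2)},q^{(2)}}^{\widetilde\Omega}$: if the two extensions differ on the collar, then the glued function $v'$ agreeing with $u_1$ outside $\Omega$ and $u_2$ inside does \emph{not} solve $\mathcal{L}_{\widetilde A^{(2)},\widetilde q^{(2)}}v'=0$ on the collar, since $u_1$ solves a different equation there. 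To choose a common $W^{1,\infty}$ extension on the collar you need $A^{(1)}|_{\partial\Omega}=A^{(2)}|_{\partial\Omega}$, which is exactly the boundary determination of Proposition~\ref{prop_boundary_reconstruction}; but that proposition uses pseudodifferential factorization and hence $C^\infty$ coefficients, which are not available under the $W^{1,\infty}$ hypotheses of Theorem~\ref{thm_main_3}. The subsequent ``unique continuation on the collar'' step is therefore circular: the coefficients on $\widetilde\Omega\setminus\overline\Omega$ are extensions chosen by you, not constrained by the data, so there is nothing there to be determined, and in any case unique continuation for first-order systems with merely $L^\infty$ coefficients is delicate and not supplied. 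The upshot is that your scheme silently re-imports boundary reconstruction, which is precisely what Theorem~\ref{thm_main_3} is designed to avoid.

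The paper's actual argument never leaves $\Omega$. It runs the CGO identity directly on $\Omega$ (both with $\mu_2$ and with $-\mu_2$), uses Proposition~\ref{prop_existence_varphi} to write $A^{(2)}-A^{(1)}=\nabla\varphi$ with $\varphi\in C^1(\overline\Omega)$, and then establishes that $\varphi$ is constant on $\partial\Omega$ by a two-dimensional slicing argument: intersect $\Omega$ with planes spanned by $\mu_1,\mu_2$, pass via Sard's theorem to slices with smooth boundary, reduce the orthogonality relations \eqref{eq_recover_A_conv_3} to moment conditions against holomorphic and antiholomorphic test functions, and invoke the Plemelj--Sokhotski--Privalov formula to deduce that $\varphi$ is constant on the boundary of each slice. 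Letting $\mu_1,\mu_2$ vary shows $\varphi$ is \emph{locally} constant on $\partial\Omega$. Here, and only here, does connectedness of $\partial\Omega$ enter: it promotes ``locally constant'' to ``globally constant,'' after which one may normalize $\varphi|_{\partial\Omega}=0$ and run the $\bar\partial$-trick from Section~\ref{sec_thm1} to conclude $\varphi\equiv 0$. So connectedness is not about making the exterior region connected for a continuation argument; it is about the constant being the same on every component of $\partial\Omega$.
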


The inverse boundary value problem of the recovery of a zeroth order perturbation of the biharmonic operator has been studied in \cite{Ikehata_1991, Isakov_1991}, and unique identifiability results were obtained, similarly to the case of the Schr\"odinger operator. The areas of physics and geometry where higher order operators occur, include the study of the Kirchhoff plate equation in the theory of elasticity, and the study of the Paneitz-Branson operator in 
conformal geometry, see \cite{GGS_book}. Inverse boundary value problems for differential perturbations of the biharmonic, or more generally, polyharmonic, operator are therefore natural to consider, and the present paper is meant as a step in this direction. 

We would also like to mention the important results of \cite{Eskin_2001}, concerned with the inverse boundary value problem for a first order matrix perturbation of the Laplacian, considered in a smoothly bounded convex domain
in $\R^n$, $n\ge 3$. The results of \cite{Eskin_2001} show that it is possible to recover the matrix potentials up to a gauge transformation, given by a smooth invertible matrix. In our study of the perturbed polyharmonic operator, rather than reducing it to a system, we adopt a direct approach, which has the merit that the issue of the gauge equivalence does not arise.

Finally, we would like to point out that the results obtained in this paper can be viewed as generalizations of the corresponding results  for second order equations, encountered
in electrical impedance tomography, see \cite{AP,ALP,N2, S}  for the two dimensional case, and \cite{Cal, GLU1, N1, PPU, Syl_Uhl_1987} for the higher dimensions, 
as well as in inverse boundary value problems and inverse scattering problems for the Schr\"odinger equation \cite{Bukh_2008, GLU1, Ima_Uhl_Yam_2010,  N2, No, Syl_Uhl_1987}, and in elliptic inverse problems on Riemannian manifolds,
\cite{GT1, GT2, LTU, LU, LeeUhl89}. There are also counterexamples for uniqueness of inverse problems with very non-regular electric fields \cite{GLU2} and magnetic fields \cite{GKLU2}. These 
counterexamples are closely related to the so-called invisibility cloaking, see \cite{GLU1, GKLU1, GKLUbull, KSVW, Le, PSS1}.

The plan of the paper is as follows. In Section 2 we construct complex geometric optics solutions of the equation  $\mathcal{L}_{A,q}u=0$ in $\Omega$, which are instrumental in proving Theorems \ref{thm_main}--\ref{thm_main_3}. 
The proof of Theorem \ref{thm_main} is then given in Section 3, while the boundary reconstruction of the vector field part of the perturbation and the proof of Theorem \ref{thm_main_2} are the subjects of Section 4. The proof of Theorem \ref{thm_main_3} is finally given in 
Section 5. Appendix A contains a characterization of curl-free vector fields which may be of some independent interest.

\section{Construction of complex geometric optics solutions} 

Let $\Omega\subset\R^n$, $n\ge 3$, be a bounded domain with $C^\infty$-boundary.  Following \cite{DKSU_2007, KenSjUhl2007}, we shall use the method of Carleman estimates to construct complex geometric optics solutions for the equation $\mathcal{L}_{A,q}u=0$ in $\Omega$,  with $A\in W^{1,\infty}(\Omega, \C^n)$ and $q\in L^\infty(\Omega,\C)$. 

First we shall derive a Carleman estimate for the semiclassical polyharmonic operator $(-h^2\Delta)^m$, where $h>0$ is a small parameter,  by iterating the corresponding Carleman estimate for the semiclassical Laplacian $-h^2\Delta$, which we now proceed to recall following  \cite{KenSjUhl2007}. 
Let $\tilde \Omega$ be an open set in $\R^n$ such that $ \Omega\subset\subset\tilde \Omega$ and 
$\varphi\in C^\infty(\tilde \Omega,\R)$.  Consider the conjugated operator 
\[
P_\varphi=e^{\frac{\varphi}{h}}(-h^2\Delta) e^{-\frac{\varphi}{h}}
\]
and its semiclassical principal symbol
\[
p_\varphi(x,\xi)=\xi^2+2i\nabla \varphi\cdot \xi-|\nabla \varphi|^2, \quad x\in \overline{\Omega},\quad  \xi\in \R^n. 
\]
Following \cite{KenSjUhl2007}, we say that $\varphi$ is a limiting Carleman weight for $-h^2\Delta$ in $\tilde \Omega$, if $\nabla \varphi\ne 0$ in $\tilde \Omega$ and the Poisson bracket of $\Re p_\varphi$ and $\Im p_\varphi$ satisfies, 
\[
\{\Re p_\varphi,\Im p_\varphi\}(x,\xi)=0 \quad \textrm{when}\quad p_\varphi(x,\xi)=0, \quad (x,\xi)\in \overline{\Omega}\times \R^n. 
\]
Examples are  linear weights $\varphi(x)=\alpha\cdot x$, $\alpha\in \R^n$, $|\alpha|=1$, and logarithmic weights $\varphi(x)=\log|x-x_0|$, with $x_0\not\in \tilde \Omega$.  In this paper we shall only be concerned with the case of linear weights.

In what follows we shall equip  the standard Sobolev space $H^s(\R^n)$, $s\in\R$, with the semiclassical norm $\|u\|_{H^s_{\textrm{scl}}}=\|\langle hD \rangle^s u\|_{L^2}$. Here $\langle\xi  \rangle=(1+|\xi|^2)^{1/2}$. 
We shall need the following result, obtained in \cite{KenSjUhl2007}.

\begin{prop}
Let $\varphi$ be a limiting Carleman weight for the semiclassical Laplacian on $\tilde \Omega$. Then the Carleman estimate 
\begin{equation}
\label{eq_Carleman_lap}
\|e^{\frac{\varphi}{h}}(-h^2\Delta)e^{-\frac{\varphi}{h}}u\|_{H^s_{\emph{scl}}}\ge \frac{h}{C_{s,\Omega}}\|u\|_{H^{s+1}_{\emph{scl}}},
\end{equation}
holds for all $u\in C^\infty_0(\Omega)$, $s\in\R$ and  all $h>0$ small enough.  
\end{prop}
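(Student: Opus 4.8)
The plan is to prove the base case $s=0$ by a positive commutator argument and then to recover general $s$ by conjugating with $\langle hD\rangle^{s}$. Write $P_\varphi=e^{\varphi/h}(-h^2\Delta)e^{-\varphi/h}$ and split it into its formally self-adjoint symmetric and antisymmetric parts, $P_\varphi=A+iB$ with $A=\tfrac12(P_\varphi+P_\varphi^{*})$ and $B=\tfrac1{2i}(P_\varphi-P_\varphi^{*})$; these are second- and first-order semiclassical differential operators with principal symbols $a=\Re p_\varphi=|\xi|^2-|\nabla\varphi|^2$ and $b=\Im p_\varphi=2\nabla\varphi\cdot\xi$. For $u\in C_0^\infty(\Omega)$, squaring out gives
\[
\|P_\varphi u\|_{L^2}^{2}=\|Au\|_{L^2}^{2}+\|Bu\|_{L^2}^{2}+i\bigl([A,B]u,u\bigr)_{L^2},
\]
where, in the semiclassical calculus, $i[A,B]$ is $h$ times the quantization of $\pm\{a,b\}$ plus a remainder of order $\le 1$ carrying an extra factor $h$. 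Away from the characteristic set $\Sigma=\{a=b=0\}$ (a compact set of codimension two, where $|\xi|=|\nabla\varphi|$ and $\xi\perp\nabla\varphi$) one has $a^{2}+b^{2}\gtrsim\langle\xi\rangle^{4}$, so the first two terms already control $u$ microlocally there; the whole point is the behaviour of $\{a,b\}$ on $\Sigma$.

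This is where the \emph{limiting} nature of $\varphi$ is felt: the defining condition $\{\Re p_\varphi,\Im p_\varphi\}=0$ on $\Sigma$ says exactly that the commutator term contributes nothing on $\Sigma$ (for a linear weight $\{a,b\}\equiv 0$ identically), so the identity is not by itself coercive. I would resolve this by the convexification of \cite{KenSjUhl2007}: replacing $\varphi$ with $\varphi_\varepsilon=\varphi+\tfrac{h}{2\varepsilon}\varphi^{2}$, a direct computation gives $\{a_\varepsilon,b_\varepsilon\}\gtrsim h/\varepsilon>0$ on the characteristic set of $P_{\varphi_\varepsilon}$, while $e^{\varphi_\varepsilon/h}=e^{\varphi/h}e^{\varphi^{2}/(2\varepsilon)}$ with $e^{\varphi^{2}/(2\varepsilon)}$ bounded above and below on $\overline\Omega$, so the Carleman estimates for $\varphi_\varepsilon$ and for $\varphi$ are equivalent up to a constant depending on $\varepsilon$. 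Feeding this positive lower bound into a G\aa rding-type estimate near the characteristic set and combining it with the elliptic bound away from it yields $\|P_{\varphi_\varepsilon}u\|_{L^2}^{2}\gtrsim_\varepsilon h^{2}\|u\|_{H^{1}_{\mathrm{scl}}}^{2}$ modulo errors $\mathcal O_\varepsilon(h^{3})\|u\|_{H^{1}_{\mathrm{scl}}}^{2}$, absorbed for $h$ small; fixing $\varepsilon$ gives \eqref{eq_Carleman_lap} with $s=0$. For the linear weights actually used in this paper one can avoid the pseudodifferential machinery altogether: after rotating so that $\varphi(x)=x_{1}$ one has $P_\varphi=(hD_{1}+i)^{2}+(hD')^{2}$, a constant-coefficient operator, and a Fourier transform in $x'$ turns the estimate into a one-dimensional Poincar\'e inequality in $x_{1}$ on the bounded slab containing $\Omega$.

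For general $s$, put $\Lambda^{s}=\langle hD\rangle^{s}$ and write $\Lambda^{s}P_\varphi u=P_\varphi\Lambda^{s}u+[P_\varphi,\Lambda^{s}]u$, where $[P_\varphi,\Lambda^{s}]$ is $h$ times a semiclassical operator of order $\le s$ (and is identically zero for a linear weight). Since $\Lambda^{s}$ is pseudolocal at scale $h$, applying it to $u\in C_0^\infty(\Omega)$ enlarges the support only by an error that is $\mathcal O(h^{\infty})$ in every semiclassical Sobolev norm; choosing $\chi\in C_0^\infty(\tilde\Omega)$ equal to $1$ near $\overline\Omega$, the function $\chi\Lambda^{s}u$ is admissible in the $s=0$ estimate on $\tilde\Omega$ and satisfies $\|\chi\Lambda^{s}u\|_{H^{1}_{\mathrm{scl}}}=\|u\|_{H^{s+1}_{\mathrm{scl}}}+\mathcal O(h^{\infty})$. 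Applying the $s=0$ estimate to $\chi\Lambda^{s}u$ and reabsorbing the commutator term together with the $\mathcal O(h^{\infty})$ remainders for $h$ small gives \eqref{eq_Carleman_lap} for all $s\in\R$.

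The crux is the first step: $P_\varphi$ is not elliptic, and the very definition of a limiting Carleman weight makes the positive-commutator gain degenerate on the characteristic set, so one cannot avoid the convexification trick and the accompanying microlocal analysis near $\Sigma$. The passage to general $s$ is comparatively soft, the only delicate point being the bookkeeping of supports when conjugating by the nonlocal operator $\langle hD\rangle^{s}$.
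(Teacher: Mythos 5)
The paper does not prove this proposition; it is stated with the remark ``obtained in \cite{KenSjUhl2007}'' and used as a black box to iterate up to the polyharmonic estimate \eqref{eq_Carleman_poly}. So there is no internal proof to compare yours against. Your sketch is, however, a faithful outline of the argument in \cite{KenSjUhl2007}: the $A+iB$ decomposition with $\|P_\varphi u\|^2=\|Au\|^2+\|Bu\|^2+i([A,B]u,u)$, the observation that the limiting-weight condition makes the commutator degenerate on the characteristic set, the convexification $\varphi\mapsto\varphi+\tfrac{h}{2\varepsilon}\varphi^2$ to restore positivity there (with the conjugating factor $e^{\varphi^2/(2\varepsilon)}$ bounded above and below on $\overline\Omega$ so that the estimates for $\varphi_\varepsilon$ and $\varphi$ are equivalent up to an $\varepsilon$-dependent constant), and the shift to general $s$ by conjugating with $\langle hD\rangle^s$ while keeping track of supports. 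You are also right that, since the paper only invokes linear weights, the whole pseudodifferential machinery is avoidable: after rotating to $\varphi=x_1$, $P_\varphi=(hD_1+i)^2+(hD')^2$ is constant-coefficient, $[A,B]=0$ exactly, and $\|Au\|^2+\|Bu\|^2\ge (h/C_\Omega)^2\|u\|_{H^1_{\mathrm{scl}}}^2$ follows from $\|Bu\|=2\|hD_1u\|\ge (h/C_\Omega)\|u\|$ (Poincar\'e in $x_1$ on the bounded slab containing $\Omega$) together with $\|hDu\|^2\le\|(hD)^2u\|\,\|u\|\le(\|Au\|+\|u\|)\|u\|$; commuting with $\langle hD\rangle^s$ is then trivial. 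That elementary route is arguably preferable for this paper, since it matches exactly the case used and drops the convexification and the semiclassical pseudodifferential calculus. The only small caveat in your general-$s$ step is that for a non-linear limiting weight the commutator $[P_\varphi,\Lambda^s]$ must genuinely be estimated and absorbed (it is $h$ times an operator of order $s+1$, not $s$, because $P_\varphi$ has order $2$); this is standard but worth stating precisely.
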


Iterating the Carleman estimate \eqref{eq_Carleman_lap} $m$ times, $m\ge 2$, we get the following Carleman estimate for the polyharmonic operator,
\begin{equation}
\label{eq_Carleman_poly}
\|e^{\frac{\varphi}{h}}(-h^2\Delta)^m e^{-\frac{\varphi}{h}}u\|_{H^s_{\textrm{scl}}}\ge \frac{h^m}{C_{s,\Omega}}\|u\|_{H^{s+m}_{\textrm{scl}}},
\end{equation}
for all $u\in C^\infty_0(\Omega)$, $s\in\R$ and $h>0$ small. 
Since we are dealing with first order perturbations of the polyharmonic operator, the following weakened version of \eqref{eq_Carleman_poly}  will be sufficient for our purposes,
\begin{equation}
\label{eq_Carleman_poly_1}
\|e^{\frac{\varphi}{h}}(-h^2\Delta)^m e^{-\frac{\varphi}{h}}u\|_{H^s_{\textrm{scl}}}\ge \frac{h^m}{C_{s,\Omega}}\|u\|_{H^{s+1}_{\textrm{scl}}},
\end{equation}
for all $u\in C^\infty_0(\Omega)$, $s\in\R$ and all $h>0$ small enough.

To add the perturbation $h^{2m}q$ to the estimate \eqref{eq_Carleman_poly_1}, we assume that 
$-1\le s\le 0$ and use that 
\[
\|qu\|_{H^s_{\textrm{scl}}}\le \|qu\|_{L^2}\le \|q\|_{L^\infty}\|u\|_{L^2}\le  \|q\|_{L^\infty}\|u\|_{H^{s+1}_{\textrm{scl}}}.
\] 
To add the perturbation 
\[
h^{2m-1}e^{\frac{\varphi}{h}}(A\cdot hD) e^{-\frac{\varphi}{h}}=h^{2m-1}(A\cdot hD+iA\cdot\nabla \varphi) 
\]
 to the estimate \eqref{eq_Carleman_poly_1}, assuming that 
$-1\le s\le 0$, we need the following estimates 
\[
\|(A\cdot\nabla \varphi)u\|_{H^s_{\textrm{scl}}}\le \|A\cdot\nabla \varphi\|_{L^\infty}\|u\|_{H^{s+1}_{\textrm{scl}}},
\]
\begin{align*}
\|A\cdot hD u\|_{H^s_{\textrm{scl}}}&\le \sum_{j=1}^n\|hD_j(A_j u)\|_{H^s_{\textrm{scl}}}+\mathcal{O}(h)\|(\div A)u\|_{H^s_{\textrm{scl}}}\\
&\le \mathcal{O}(1)\sum_{j=1}^n\|A_j u\|_{H^{s+1}_{\textrm{scl}}}+\mathcal{O}(h)\|u\|_{H^{s+1}_{\textrm{scl}}}\le \mathcal{O}(1)\|u\|_{H^{s+1}_{\textrm{scl}}}.
\end{align*}
When obtaining the last inequality, we notice that by complex interpolation it suffices to consider the cases $s=0$ and $s=-1$. 

Let 
\[
\mathcal{L}_\varphi=e^{\frac{\varphi}{h}} h^{2m}\mathcal{L}_{A,q} e^{-\frac{\varphi}{h}}.
\]
Thus, we obtain the following Carleman estimate for a first order perturbation of the polyharmonic operator.

\begin{prop} 

Let $A\in W^{1,\infty}(\Omega,\C^n)$,  $q\in L^\infty(\Omega,\C)$, and $\varphi$ be a limiting Carleman weight for the semiclassical Laplacian on $\tilde \Omega$. Assume that $m\ge 2$.  If $-1\le s\le 0$, then for $h>0$ small enough,  one 
has 
\begin{equation}
\label{eq_Carleman_poly_perturbation}
\|\mathcal{L}_\varphi u\|_{H^s_{\emph{scl}}}\ge \frac{h^m}{C_{s,\Omega,A,q}}\|u\|_{H^{s+1}_{\emph{scl}}},
\end{equation}
for all $u\in C^\infty_0(\Omega)$. 
\end{prop}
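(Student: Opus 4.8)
The plan is to derive \eqref{eq_Carleman_poly_perturbation} by combining the unperturbed estimate \eqref{eq_Carleman_poly_1} for the semiclassical polyharmonic operator with the perturbative bounds already assembled just above the statement. Writing $\mathcal{L}_\varphi = e^{\varphi/h}h^{2m}(-\Delta)^m e^{-\varphi/h} + h^{2m-1}e^{\varphi/h}(A\cdot hD)e^{-\varphi/h} + h^{2m}e^{\varphi/h}qe^{-\varphi/h}$, note that the first term is exactly $e^{\varphi/h}(-h^2\Delta)^m e^{-\varphi/h}$, the third term is $h^{2m}qu$ (conjugation by $e^{\varphi/h}$ leaves the zeroth order multiplier $q$ unchanged), and the middle term equals $h^{2m-1}(A\cdot hD + iA\cdot\nabla\varphi)u$ by the identity recorded before the proposition. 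So for $u\in C^\infty_0(\Omega)$,
\[
\|e^{\frac{\varphi}{h}}(-h^2\Delta)^m e^{-\frac{\varphi}{h}}u\|_{H^s_{\mathrm{scl}}} \le \|\mathcal{L}_\varphi u\|_{H^s_{\mathrm{scl}}} + h^{2m}\|qu\|_{H^s_{\mathrm{scl}}} + h^{2m-1}\|(A\cdot hD + iA\cdot\nabla\varphi)u\|_{H^s_{\mathrm{scl}}}.
\]

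Next I would invoke \eqref{eq_Carleman_poly_1} to bound the left-hand side from below by $\frac{h^m}{C_{s,\Omega}}\|u\|_{H^{s+1}_{\mathrm{scl}}}$, and then plug in the perturbative estimates valid for $-1\le s\le 0$: namely $\|qu\|_{H^s_{\mathrm{scl}}} \le \|q\|_{L^\infty}\|u\|_{H^{s+1}_{\mathrm{scl}}}$, $\|(A\cdot\nabla\varphi)u\|_{H^s_{\mathrm{scl}}} \le \|A\cdot\nabla\varphi\|_{L^\infty}\|u\|_{H^{s+1}_{\mathrm{scl}}}$, and $\|A\cdot hD\,u\|_{H^s_{\mathrm{scl}}} \le \mathcal{O}(1)\|u\|_{H^{s+1}_{\mathrm{scl}}}$ (the $\mathcal{O}(1)$ depending on $\|A\|_{W^{1,\infty}}$ and $\Omega$ through the interpolation argument at $s=0,-1$). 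Collecting these gives
\[
\frac{h^m}{C_{s,\Omega}}\|u\|_{H^{s+1}_{\mathrm{scl}}} \le \|\mathcal{L}_\varphi u\|_{H^s_{\mathrm{scl}}} + C(h^{2m} + h^{2m-1})\|u\|_{H^{s+1}_{\mathrm{scl}}},
\]
with $C = C(s,\Omega,A,q)$. Since $m\ge 2$, we have $2m-1 \ge m+1 > m$, so $h^{2m}+h^{2m-1} = o(h^m)$ as $h\to 0$; hence for $h$ small enough the last term is absorbed into the left-hand side (say bounded by $\frac{h^m}{2C_{s,\Omega}}\|u\|_{H^{s+1}_{\mathrm{scl}}}$), leaving $\|\mathcal{L}_\varphi u\|_{H^s_{\mathrm{scl}}} \ge \frac{h^m}{2C_{s,\Omega}}\|u\|_{H^{s+1}_{\mathrm{scl}}}$, which is the claimed estimate with $C_{s,\Omega,A,q} = 2C_{s,\Omega}$ after adjusting constants.

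There is no serious obstacle here: the proposition is essentially a bookkeeping consequence of material already displayed in the text. The one point requiring a little care is the role of $m\ge 2$ — it is precisely what makes the perturbation terms, which carry a factor $h^{2m-1}$, lower order relative to the $h^m$ gain in the Carleman estimate; for $m=1$ the term $h^{2m-1}=h$ would be exactly of the critical size $h^m=h$ and could not be absorbed, which is consistent with the known failure of the corresponding result in that case. The other mild subtlety, already flagged in the excerpt, is that the bound on $\|A\cdot hD\,u\|_{H^s_{\mathrm{scl}}}$ for intermediate $s\in(-1,0)$ follows by complex interpolation between the endpoint cases $s=0$ and $s=-1$, where one uses $\|A_j u\|_{H^1_{\mathrm{scl}}} \le \mathcal{O}(1)\|u\|_{H^1_{\mathrm{scl}}}$ (valid since $A_j\in W^{1,\infty}$) and $\|A_j u\|_{L^2}\le \|A_j\|_{L^\infty}\|u\|_{L^2}$ respectively.
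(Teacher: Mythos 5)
Your proof is correct and takes essentially the same approach as the paper: the paper does not isolate a separate proof for this proposition but instead develops exactly the decomposition and perturbative bounds you use in the paragraphs immediately preceding it, and the final absorption of the $h^{2m-1}$ and $h^{2m}$ terms into the $h^m$ gain (valid precisely because $m\ge 2$) is left implicit. You have filled in that bookkeeping correctly, including the correct explanation of why $m\ge 2$ is needed and the interpolation argument for the intermediate Sobolev indices.
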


The formal $L^2$-adjoint of $\mathcal{L}_\varphi$ is given by
\[
\mathcal{L}_\varphi^* =e^{-\frac{\varphi}{h}} (h^{2m}\mathcal{L}_{\bar{A},i^{-1}\nabla\cdot \bar{A}+\bar q}) e^{\frac{\varphi}{h}}. 
\]
Notice that  if $\varphi$ is a limiting Carleman weight, then so is $-\varphi$. This implies that the Carleman estimate \eqref{eq_Carleman_poly_perturbation} holds also for the formal adjoint $\mathcal{L}_\varphi^*$.

To construct complex geometric optics solutions we need the following solvability result, similar to \cite{DKSU_2007}.  The proof is essentially well-known, and is included here for the convenience of the reader. In what follows,  we denote by $H^1_{\textrm{scl}}(\Omega)$ the semiclassical Sobolev space of order one on $\Omega$, equipped with the norm
\[
\|u\|_{H^1_{\textrm{scl}}(\Omega)}^2=\|u\|_{L^2(\Omega)}^2+ \|h\nabla u\|_{L^2(\Omega)}^2. 
\]

\begin{prop}
\label{prop_Hahn-Banach}
Let $A\in W^{1,\infty}(\Omega,\C^n)$,  $q\in L^\infty(\Omega,\C)$, and $\varphi$ be a limiting Carleman weight for the semiclassical  Laplacian on $\tilde \Omega$. Assume that $m\ge 2$.
If $h>0$ is small enough, then for any $v\in L^2(\Omega)$, there is a solution $u\in H^1(\Omega)$ of the equation
\[
\mathcal{L}_\varphi u=v \quad \text{in} \quad \Omega,
\]
which satisfies 
\[
\|u\|_{H^1_{\emph{\textrm{scl}}}}\le \frac{C}{h^m}\|v\|_{L^2}.
\] 

\end{prop}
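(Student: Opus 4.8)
The plan is to deduce the solvability statement from the Carleman estimate \eqref{eq_Carleman_poly_perturbation} for the formal adjoint $\mathcal{L}_\varphi^*$, by the standard duality argument based on the Hahn--Banach theorem, in the spirit of \cite{DKSU_2007}. Since $-\varphi$ is again a limiting Carleman weight, the estimate \eqref{eq_Carleman_poly_perturbation} holds for $\mathcal{L}_\varphi^*$ as well, and specializing to $s=-1$ gives, for all $h>0$ small enough,
\[
\|\mathcal{L}_\varphi^* w\|_{H^{-1}_{\textrm{scl}}}\ge \frac{h^m}{C}\,\|w\|_{L^2}, \qquad w\in C^\infty_0(\Omega).
\]
This is the only analytic input; everything that follows is soft.

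First I would extend each $w\in C^\infty_0(\Omega)$ by zero and regard $\mathcal{S}:=\{\mathcal{L}_\varphi^* w: w\in C^\infty_0(\Omega)\}$ as a linear subspace of $H^{-1}_{\textrm{scl}}(\R^n)$. On $\mathcal{S}$ I define the linear functional $L$ by $L(\mathcal{L}_\varphi^* w)=(w,v)_{L^2(\Omega)}$. The displayed estimate for $\mathcal{L}_\varphi^*$ shows first that $\mathcal{L}_\varphi^*$ is injective on $C^\infty_0(\Omega)$, so $L$ is well defined, and second, by the Cauchy--Schwarz inequality,
\[
|L(\mathcal{L}_\varphi^* w)|\le \|v\|_{L^2}\,\|w\|_{L^2}\le \frac{C}{h^m}\,\|v\|_{L^2}\,\|\mathcal{L}_\varphi^* w\|_{H^{-1}_{\textrm{scl}}},
\]
so that $L$ is bounded on $\mathcal{S}$ with norm at most $C h^{-m}\|v\|_{L^2}$.

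By the Hahn--Banach theorem, $L$ extends to a continuous linear functional $\widetilde L$ on all of $H^{-1}_{\textrm{scl}}(\R^n)$ with the same norm bound. Using the isometric identification of the dual of $H^{-1}_{\textrm{scl}}$ with $H^1_{\textrm{scl}}$ via the $L^2$ pairing (valid thanks to the choice of the semiclassical norms), one obtains $u\in H^1_{\textrm{scl}}(\R^n)$ with $\|u\|_{H^1_{\textrm{scl}}}\le C h^{-m}\|v\|_{L^2}$ representing $\widetilde L$. Evaluating the identity $\widetilde L=L$ on $\mathcal{S}$ at $\mathcal{L}_\varphi^* w$, $w\in C^\infty_0(\Omega)$, and unwinding the definition of the formal adjoint, one gets $\langle \mathcal{L}_\varphi u, w\rangle=\langle v, w\rangle$ for every such $w$, i.e. $\mathcal{L}_\varphi u=v$ in $\Omega$ in the sense of distributions. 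Restricting $u$ to $\Omega$ yields the asserted solution in $H^1(\Omega)$ with $\|u\|_{H^1_{\textrm{scl}}(\Omega)}\le C h^{-m}\|v\|_{L^2}$.

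I do not expect a genuine obstacle here, since all the work lies in the Carleman estimate already established in \eqref{eq_Carleman_poly_perturbation}. The points requiring care are the choice of the exponent $s=-1$, so that the one-derivative loss in the Carleman estimate for $\mathcal{L}_\varphi^*$ is compensated by the one-derivative gain in the target space $H^1_{\textrm{scl}}$, and the bookkeeping of the duality $(H^{-1}_{\textrm{scl}})^*=H^1_{\textrm{scl}}$ together with the (anti)linearity conventions, so that the constant in the final bound comes out exactly of the form $C h^{-m}$.
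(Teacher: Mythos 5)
Your argument is correct and is essentially identical to the one in the paper: both apply the Carleman estimate for the adjoint $\mathcal{L}_\varphi^*$ (with $s=-1$) to bound the linear functional $w\mapsto (w,v)_{L^2}$, extend it by Hahn--Banach on $H^{-1}_{\textrm{scl}}(\R^n)$, and represent the extension by an element $u\in H^1_{\textrm{scl}}$ via the $(H^{-1},H^1)$ duality. Your added remarks on the injectivity of $\mathcal{L}_\varphi^*$ and the explicit choice $s=-1$ are helpful clarifications but do not change the method.
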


\begin{proof}
Consider the following complex linear functional 
\[
L: \mathcal{L}_\varphi^* C^\infty_0(\Omega)\to \C, \quad \mathcal{L}_\varphi^* w\mapsto (w,v)_{L^2}. 
\]
By the Carleman estimate \eqref{eq_Carleman_poly_perturbation} for the formal adjoint $\mathcal{L}_\varphi^*$, the map $L$ is well-defined. 
Let $w\in  C^\infty_0(\Omega)$. We have
\[
|L(\mathcal{L}_\varphi^* w)|=|(w,v)_{L^2}|\le \|w\|_{L^2} \|v\|_{L^2}\le \frac{C}{h^m}\|\mathcal{L}_\varphi^* w\|_{H^{-1}_{\textrm{scl}}} \|v\|_{L^2},
\]
showing that $L$ is bounded in the $H^{-1}$-norm. Thus, by the Hahn-Banach theorem, we may extend $L$ to a linear continuous functional $\tilde L$ on $H^{-1}(\R^n)$ without increasing the norm. By the Riesz representation theorem, there exists $u\in H^1(\R^n)$ such that for all $w\in H^{-1}(\R^n)$, 
\[
\tilde L(w)=(w,u)_{(H^{-1},H^1)}, \quad \textrm{and}\quad \|u\|_{H^{1}_{\textrm{scl}}}\le \frac{C}{h^m}\|v\|_{L^2}. 
\]
Here $(\cdot,\cdot)_{(H^{-1},H^1)}$ stands for the usual $L^2$-duality. It follows that $\mathcal{L}_\varphi u=v$ in $\Omega$. This completes the proof. 
\end{proof}

Our next goal is to construct complex geometric optics solutions of  the equation $\mathcal{L}_{A,q}u=0$ in $\Omega$, i.e. solutions of the following form,
\[
u(x,\zeta;h)=e^{\frac{ix\cdot \zeta}{h}} (a(x,\zeta)+h r(x,\zeta; h)),
\]  
where $\zeta\in \C^n$ such that $\zeta\cdot \zeta=0$, $|\Re \zeta|=|\Im\zeta|=1$, the amplitude $a\in C^\infty(\overline{\Omega})$, and the remainder satisfies $\|r\|_{H^1_{\textrm{scl}}(\Omega)}= \mathcal{O}(1)$. 

Consider 
\begin{equation}
\label{eq_indep_h}
e^{\frac{-ix\cdot \zeta}{h}} h^{2m}\mathcal{L}_{A,q} e^{\frac{ix\cdot \zeta}{h}}=(-h^2\Delta-2i\zeta\cdot h\nabla)^m+ h^{2m-1}A\cdot hD+h^{2m-1}A\cdot \zeta +h^{2m}q.  
\end{equation}
Since $m\ge 2$, in order Êto get
\begin{equation}
\label{eq_O(h)}
e^{\frac{-ix\cdot \zeta}{h}} h^{2m}\mathcal{L}_{A,q} (e^{\frac{ix\cdot \zeta}{h}} a)=\mathcal{O}(h^{m+1}),
\end{equation}
in $L^2(\Omega)$, 
we should choose $a\in C^\infty(\overline{\Omega})$, satisfying the following first transport equation, 
\begin{equation}
\label{eq_transport}
(\zeta\cdot \nabla)^m a=0\quad \textrm{in}\quad \Omega.  
\end{equation} This is clearly possible. 
Having chosen the amplitude $a$ in this way, we obtain the following equation for $r$,
\begin{equation}
\label{eq_for_r}
e^{\frac{x \cdot\mathrm{Im}\zeta}{h}} h^{2m} \mathcal{L}_{A,q} e^{-\frac{x\cdot \mathrm{Im} \zeta}{h}} e^{\frac{ix\cdot \mathrm{Re}\zeta}{h}} hr=-
e^{\frac{ix\cdot\mathrm{Re}\zeta}{h }}e^{\frac{-ix\cdot \zeta}{h}} h^{2m}\mathcal{L}_{A,q} (e^{\frac{ix\cdot \zeta}{h}} a)\quad \textrm{in }\Omega.
\end{equation}
Thanks to 
Proposition \ref{prop_Hahn-Banach} and \eqref{eq_O(h)}, for $h>0$ small enough,  there exists a solution $r\in H^1(\Omega)$ of \eqref{eq_for_r} such that $\|r\|_{H^1_{\textrm{scl}}}=\mathcal{O}(1)$. 

Summing up, we have the following result.

\begin{prop}

\label{prop_complex_geom_optics}

Let $A\in W^{1,\infty}(\Omega,\C^n)$,  $q\in L^\infty(\Omega,\C)$, and $\zeta\in \C^n$ be such that $\zeta\cdot \zeta=0$ and $|\emph{\Re} \zeta|=|\emph{\Im}\zeta|=1$. Then for all $h>0$ small enough,  there exist solutions $u(x,\zeta; h)\in H^1(\Omega)$ to the equation $\mathcal{L}_{A,q}u=0$ in $\Omega$, of the form
\[
u(x,\zeta;h)=e^{\frac{ix\cdot \zeta}{h}} (a(x,\zeta)+h r(x,\zeta;h)),
\]  
where $a(\cdot,\zeta)\in C^\infty(\overline{\Omega})$ satisfies \eqref{eq_transport} and $\|r\|_{H^1_{\emph{\textrm{scl}}}}=\mathcal{O}(1)$. 

\end{prop}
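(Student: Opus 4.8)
The plan is to construct the complex geometric optics solution in the standard way: conjugate the operator $h^{2m}\mathcal{L}_{A,q}$ by the exponential $e^{ix\cdot\zeta/h}$, arrange the amplitude $a$ to kill the top-order term, and then solve for the remainder $r$ using the solvability statement of Proposition \ref{prop_Hahn-Banach}. First I would compute the conjugated operator
\[
e^{\frac{-ix\cdot\zeta}{h}} h^{2m}\mathcal{L}_{A,q} e^{\frac{ix\cdot\zeta}{h}}=(-h^2\Delta-2i\zeta\cdot h\nabla)^m+h^{2m-1}A\cdot hD+h^{2m-1}A\cdot\zeta+h^{2m}q,
\]
using $\zeta\cdot\zeta=0$ to see that there is no term of order $h^0$; the leading contribution comes from the $m$-fold product, whose lowest-order-in-$h$ piece is $(-2i\zeta\cdot h\nabla)^m=(-2i)^m h^m(\zeta\cdot\nabla)^m$. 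Since $m\ge2$, all the other terms in the expansion of $(-h^2\Delta-2i\zeta\cdot h\nabla)^m$ carry at least $m+1$ powers of $h$, as does $h^{2m-1}A\cdot hD$, $h^{2m-1}A\cdot\zeta$ and $h^{2m}q$ (here $2m-1\ge m+1$ because $m\ge2$). Hence if $a\in C^\infty(\overline\Omega)$ solves the transport equation \eqref{eq_transport}, namely $(\zeta\cdot\nabla)^m a=0$, the full expression $e^{\frac{-ix\cdot\zeta}{h}} h^{2m}\mathcal{L}_{A,q}(e^{\frac{ix\cdot\zeta}{h}}a)$ is $\mathcal{O}(h^{m+1})$ in $L^2(\Omega)$, which is precisely \eqref{eq_O(h)}.

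Next I would verify that the transport equation is solvable with a smooth amplitude on $\overline\Omega$. After a rotation one may assume $\Re\zeta=e_1$, $\Im\zeta=e_2$, so that $\zeta\cdot\nabla=\partial_{x_1}+i\partial_{x_2}=2\partial_{\bar z}$ in the $z=x_1+ix_2$ variables; any holomorphic function of $z$ (for instance $a\equiv1$, or $a=e^{iz\cdot\rho}$ for a real covector $\rho$ in the $x_1x_2$-plane, which is useful later for the unique continuation/Fourier transform argument) then satisfies $(\zeta\cdot\nabla)a=0$ and hence \emph{a fortiori} $(\zeta\cdot\nabla)^m a=0$. So the claim "This is clearly possible'' is justified and there is considerable freedom in the choice of $a$.

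With $a$ fixed, I would set up the equation for $r$: writing $\zeta=\Re\zeta+i\Im\zeta$ and separating the oscillatory and exponentially growing factors, the ansatz $u=e^{ix\cdot\zeta/h}(a+hr)$ solves $\mathcal{L}_{A,q}u=0$ in $\Omega$ if and only if
\[
\mathcal{L}_\varphi(e^{\frac{ix\cdot\mathrm{Re}\zeta}{h}}hr)=-e^{\frac{ix\cdot\mathrm{Re}\zeta}{h}}e^{\frac{-ix\cdot\zeta}{h}}h^{2m}\mathcal{L}_{A,q}(e^{\frac{ix\cdot\zeta}{h}}a)\quad\text{in }\Omega,
\]
with the linear limiting Carleman weight $\varphi(x)=-x\cdot\Im\zeta$ (so $e^{\varphi/h}=e^{-x\cdot\mathrm{Im}\zeta/h}$); this is exactly \eqref{eq_for_r}. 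The right-hand side is $\mathcal{O}(h^{m+1})$ in $L^2(\Omega)$ by \eqref{eq_O(h)} (multiplication by the unimodular factor $e^{ix\cdot\mathrm{Re}\zeta/h}$ does not change the $L^2$ norm), so Proposition \ref{prop_Hahn-Banach} applied with this $\varphi$ furnishes, for all $h>0$ small, a solution $v=e^{ix\cdot\mathrm{Re}\zeta/h}hr\in H^1(\Omega)$ of $\mathcal{L}_\varphi v=\text{(RHS)}$ with $\|v\|_{H^1_{\textrm{scl}}}\le C h^{-m}\|\text{RHS}\|_{L^2}=\mathcal{O}(h)$. Unwinding, $r$ satisfies $\|r\|_{H^1_{\textrm{scl}(\Omega)}}=\mathcal{O}(1)$ — one should note here that peeling off the unimodular factor $e^{ix\cdot\mathrm{Re}\zeta/h}$ costs at worst a bounded factor in the semiclassical $H^1$ norm, since $h\nabla(e^{ix\cdot\mathrm{Re}\zeta/h}hr)=e^{ix\cdot\mathrm{Re}\zeta/h}(i(\mathrm{Re}\zeta)hr+h\cdot h\nabla r)$ and $|\mathrm{Re}\zeta|=1$. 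This proves the proposition.

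The only real subtlety — the "main obstacle'' — is the bookkeeping of powers of $h$ in the conjugated operator and, hand in hand with it, the choice of the transport equation: one must check that taking only the \emph{leading} term $(-2i)^m h^m(\zeta\cdot\nabla)^m$ of $(-h^2\Delta-2i\zeta\cdot h\nabla)^m$ to vanish is enough to make the entire remainder $\mathcal{O}(h^{m+1})$, which is where the hypothesis $m\ge2$ enters (it guarantees $2m-1\ge m+1$, so that the first-order perturbation terms $h^{2m-1}A\cdot hD$ and $h^{2m-1}A\cdot\zeta$ are harmless). Everything else is a routine application of Proposition \ref{prop_Hahn-Banach} together with elementary manipulations of semiclassical Sobolev norms.
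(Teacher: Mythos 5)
Your proposal is correct and follows essentially the same route as the paper: conjugate $h^{2m}\mathcal{L}_{A,q}$ by the oscillatory exponential, observe that the unique $\mathcal{O}(h^m)$ term is $(-2i)^m h^m(\zeta\cdot\nabla)^m$ while everything else is $\mathcal{O}(h^{m+1})$ precisely because $m\ge2$, choose $a$ to satisfy the transport equation \eqref{eq_transport}, and solve \eqref{eq_for_r} for the remainder via Proposition~\ref{prop_Hahn-Banach}. The extra details you supply (explicit solvability of the transport equation via holomorphic functions, and the harmlessness of the unimodular factor $e^{ix\cdot\Re\zeta/h}$ in the semiclassical $H^1$ norm) are correct and merely make explicit what the paper leaves implicit.
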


\begin{rem}
\label{rem_com_geom_1} In what follows, we shall need complex geometric optics solutions belonging to $H^{2m}(\Omega)$. To obtain such solutions, let $\Omega'\supset\supset\Omega$ be a bounded domain with smooth boundary,  and let us extend $A\in W^{1,\infty}(\Omega,\C^n)$ and $q\in L^\infty(\Omega)$ to $W^{1,\infty}(\Omega',\C^n)$ and $L^\infty(\Omega')$-functions, respectively. By elliptic regularity, the complex geometric optics solutions, constructed on $\Omega'$, according to Proposition \ref{prop_complex_geom_optics},   belong to  $H^{2m}(\Omega)$.

\end{rem}

\begin{rem} 
\label{rem_com_geom_2}
We shall also  consider the complex phases $x\cdot \zeta$, with $\zeta$ depending slightly on $h$, to be precise, such that  $\zeta=\zeta^{(0)}+\mathcal{O}(h)$ with $\zeta^{(0)}\in \C^n$ being independent of  $h$.  In this case  it follows from \eqref{eq_indep_h} that we can construct complex geometric optics solutions  with amplitudes $a=a(x,\zeta^{(0)})$, which are independent of $h$ and which satisfy the following transport equation
\[
(\zeta^{(0)}\cdot \nabla)^m a=0\quad \textrm{in}\quad \Omega.  
\]
\end{rem}

\section{Proof of Theorem \ref{thm_main}}

\label{sec_thm1}

The first step is a standard reduction to a larger domain, see \cite{Syl_Uhl_1987}.

\begin{prop}
\label{prop_extension_larger_dom}
Let $\Omega\subset\subset \Omega'$ be two bounded domains in $\R^n$ with smooth boundaries,  and let $A^{(1)},A^{(2)}\in W^{1,\infty}(\Omega',\C^n)$,  $q^{(1)},q^{(2)}\in L^\infty(\Omega',\C)$
satisfy $A^{(1)}=A^{(2)}$ and $q^{(1)}=q^{(2)}$ in $\Omega'\setminus\Omega$. If $\mathcal{C}_{A^{(1)},q^{(1)}}^\Omega=\mathcal{C}_{A^{(2)},q^{(2)}}^\Omega$, then  $\mathcal{C}_{A^{(1)},q^{(1)}}^{\Omega'}=\mathcal{C}_{A^{(2)},q^{(2)}}^{\Omega'}$.

\end{prop}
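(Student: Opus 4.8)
The plan is to extend Cauchy data from $\Omega$ to the larger domain $\Omega'$ by gluing in solutions on the collar $\Omega' \setminus \overline{\Omega}$, using the hypothesis that the coefficients agree there. Given a solution $u_1 \in H^{2m}(\Omega')$ of $\mathcal{L}_{A^{(1)},q^{(1)}} u_1 = 0$ in $\Omega'$, I want to produce $u_2 \in H^{2m}(\Omega')$ solving $\mathcal{L}_{A^{(2)},q^{(2)}} u_2 = 0$ in $\Omega'$ with $(\gamma_{\p \Omega'} u_1, \tilde\gamma_{\p \Omega'} u_1) = (\gamma_{\p \Omega'} u_2, \tilde\gamma_{\p \Omega'} u_2)$. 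The Cauchy data of $u_1$ restricted to $\p \Omega$ lies in $\mathcal{C}_{A^{(1)},q^{(1)}}^\Omega$, hence by hypothesis in $\mathcal{C}_{A^{(2)},q^{(2)}}^\Omega$, so there is $\tilde u_2 \in H^{2m}(\Omega)$ with $\mathcal{L}_{A^{(2)},q^{(2)}} \tilde u_2 = 0$ in $\Omega$ and $(\gamma_{\p\Omega} \tilde u_2, \tilde\gamma_{\p\Omega} \tilde u_2) = (\gamma_{\p\Omega} u_1, \tilde\gamma_{\p\Omega} u_1)$.

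The key point is then to define $u_2$ on all of $\Omega'$ by setting $u_2 = \tilde u_2$ on $\Omega$ and $u_2 = u_1$ on $\Omega' \setminus \overline{\Omega}$. First I would check that this patching produces an $H^{2m}(\Omega')$ function: the two pieces agree to order $2m-1$ along $\p\Omega$ (their full Dirichlet and Neumann traces $\gamma, \tilde\gamma$ match, which together specify $\p_\nu^j u$ on $\p\Omega$ for $j = 0, \dots, 2m-1$), and a standard transmission/gluing lemma for Sobolev spaces then guarantees $u_2 \in H^{2m}(\Omega')$ with no distributional singularity supported on $\p\Omega$. Next I would verify $\mathcal{L}_{A^{(2)},q^{(2)}} u_2 = 0$ in $\Omega'$: on $\Omega$ this is the equation satisfied by $\tilde u_2$; on $\Omega' \setminus \overline{\Omega}$, since $A^{(2)} = A^{(1)}$ and $q^{(2)} = q^{(1)}$ there, it is the equation satisfied by $u_1$; and since $u_2 \in H^{2m}(\Omega')$ there is no contribution concentrated on $\p\Omega$. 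Finally, on $\p\Omega'$ the solution $u_2$ coincides with $u_1$ together with all its normal derivatives up to order $2m-1$, so $(\gamma_{\p\Omega'} u_2, \tilde\gamma_{\p\Omega'} u_2) = (\gamma_{\p\Omega'} u_1, \tilde\gamma_{\p\Omega'} u_1)$. This shows $\mathcal{C}_{A^{(1)},q^{(1)}}^{\Omega'} \subset \mathcal{C}_{A^{(2)},q^{(2)}}^{\Omega'}$; the reverse inclusion is symmetric, so equality holds.

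The main obstacle I anticipate is purely the gluing step: one must be careful that matching $\gamma$ and $\tilde\gamma$ across $\p\Omega$ really is the correct compatibility condition for membership in $H^{2m}(\Omega')$ and, crucially, that no transmission term (a distribution supported on $\p\Omega$ built from jumps of derivatives of order $\ge 2m$) appears when applying the $2m$-th order operator $\mathcal{L}_{A^{(2)},q^{(2)}}$. The correct statement is that if a function is $H^{2m}$ on each side of a smooth hypersurface and its traces up to order $2m-1$ agree, then it is globally $H^{2m}$ and the operator acts with no boundary contribution — this is where the precise definition of $\tilde\gamma$ via the full list $(\p_\nu^m u, \dots, \p_\nu^{2m-1} u)$ is used. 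I would cite \cite{Grubbbook2009} for this transmission fact. The rest is bookkeeping with the definitions of $\mathcal{C}^\Omega$ and $\mathcal{C}^{\Omega'}$.
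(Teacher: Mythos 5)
Your proof is correct and follows essentially the same gluing strategy as the paper's own proof: take a solution on $\Omega'$, replace it on $\Omega$ by a solution for the other operator with matching Cauchy data on $\p\Omega$, patch, and appeal to the agreement of coefficients on the collar. The paper simply asserts $v' \in H^{2m}(\Omega')$ and $\mathcal{L}_{A^{(2)},q^{(2)}}v'=0$ without comment; your extra remarks on the transmission/gluing lemma make explicit a step the paper leaves implicit, but there is no difference in approach.
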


\begin{proof}

Let $u'\in H^{2m}(\Omega')$ be a solution of $\mathcal{L}_{A^{(1)},q^{(1)}}u'=0$ in $\Omega'$.  
Since $\mathcal{C}_{A^{(1)},q^{(1)}}^\Omega=\mathcal{C}_{A^{(2)},q^{(2)}}^\Omega$, there exists $v\in H^{2m}(\Omega)$, solving  $\mathcal{L}_{A^{(2)},q^{(2)}}v=0$ in $\Omega$, and satisfying $\gamma v=\gamma u'$ in $\p \Omega$ and $\tilde \gamma v=\tilde \gamma u'$ in $\p \Omega$.  Setting
\[
v'=\begin{cases} v & \textrm{in } \Omega,\\
u' & \textrm{in } \Omega'\setminus \Omega,
\end{cases}
\]
we get $v'\in H^{2m}(\Omega')$ and $\mathcal{L}_{A^{(2)},q^{(2)}}v'=0$ in $\Omega'$. Thus, $\mathcal{C}_{A^{(1)},q^{(1)}}^{\Omega'}\subset\mathcal{C}_{A^{(2)},q^{(2)}}^{\Omega'}$. The same argument in the other direction shows the claim.

\end{proof}

Recall that $A^{(j)}\in W^{1,\infty}(\R^n,\C^n)\cap \mathcal{E}'(\overline\Omega,\C^n)$, $j=1,2$.  Let $B(0,R)$ be a open ball in $\R^n$, centered at the origin and of  radius $R$ such that  $\Omega\subset\subset B(0,R)$. 
We extend $q^{(j)}\in L^\infty(\Omega)$, $j=1,2$, by zero to $B(0,R)\setminus \Omega$, and denote these extensions by the same letters.  
According to Proposition \ref{prop_extension_larger_dom}, we know that $\mathcal{C}_{A^{(1)},q^{(1)}}^{B(0,R)}=\mathcal{C}_{A^{(2)},q^{(2)}}^{B(0,R)}$. 

We shall need the following consequence of the Green's formula, see \cite{Agmon_book}, 
\begin{equation}
\label{eq_Green_form}
(\mathcal{L}_{A,q}u,v)_{L^2(B(0,R))} = (u, \mathcal{L}_{A,q}^*v)_{L^2(B(0,R))},\quad u,v\in H^{2m}(B(0,R)),  \gamma u=\tilde \gamma u=0,
\end{equation}
where $\mathcal{L}_{A,q}^*=\mathcal{L}_{\bar{A},i^{-1}\nabla \cdot \bar{A}+\bar q}$.

Let $u_1\in H^{2m}(B(0,R))$ be a solution to $\mathcal{L}_{A^{(1)},q^{(1)}} u_1=0$ in $B(0,R)$. Then there exists a solution $u_2\in H^{2m}(B(0,R))$ to 
$\mathcal{L}_{A^{(2)},q^{(2)}} u_2=0$ in $B(0,R)$ such that $\gamma u_1=\gamma u_2$ and $\tilde \gamma u_1=\tilde \gamma u_2$.  
We have
\[
\mathcal{L}_{A^{(1)},q^{(1)}}(u_1-u_2)=(A^{(2)}-A^{(1)})\cdot Du_2 + (q^{(2)}-q^{(1)})u_2\quad \textrm{in}\quad  B(0,R). 
\]
Let $v\in H^{2m}(B(0,R))$ satisfy 
\[
\mathcal{L}_{A^{(1)},q^{(1)}}^* v=0\quad \textrm{in}\quad B(0,R). 
\]
Using \eqref{eq_Green_form}, we get
\begin{equation}
\label{eq_identity_main}
\int_{B(0,R)} ((A^{(2)}-A^{(1)})\cdot Du_2)\bar v dx  + \int_{B(0,R)} (q^{(2)}-q^{(1)})u_2 \bar vdx=0. 
\end{equation}

To show the equalities $A^{(1)}=A^{(2)}$ and $q^{(1)}=q^{(2)}$, the idea is to use the identity \eqref{eq_identity_main} with $u_2$ and $v$ being complex geometric optics solutions. 
To construct these solutions, let $\xi,\mu_1,\mu_2 \in \R^n$ be such that $|\mu_1|=|\mu_2|=1$ and $\mu_1\cdot\mu_2=\mu_1\cdot \xi=\mu_2\cdot \xi=0$. 
Similarly to \cite{Sun_1993}, we set for $h>0$ small enough, 
\begin{equation}
\label{eq_choice_zeta}
\zeta_2=\frac{h\xi}{2}+\sqrt{1-h^2\frac{|\xi|^2}{4}}\mu_1+i\mu_2,\quad 
\zeta_1=-\frac{h\xi}{2}+\sqrt{1-h^2\frac{|\xi|^2}{4}}\mu_1-i\mu_2,
\end{equation}
so that $\zeta_j\cdot \zeta_j=0$, $j=1,2$, and $\zeta_2-\bar\zeta_1=h\xi$.

Then by Proposition \ref{prop_complex_geom_optics}, Remarks \ref{rem_com_geom_1} and \ref{rem_com_geom_2}, for $h>0$ small enough,  there exist solutions $u_2(x,\zeta_2;h)\in H^{2m}(B(0,R))$ and $v(x,\zeta_1;h)\in H^{2m}(B(0,R))$, to the equations 
\[
\mathcal{L}_{A^{(2)},q^{(2)}}u_2=0\quad \textrm{in } B(0,R)\quad\textrm{and}\quad
\mathcal{L}_{A^{(1)},q^{(1)}}^*v=0\quad \textrm{in } B(0,R), 
\]
respectively, of the form
\begin{equation}
\label{eq_u_2_v_optics}
\begin{aligned}
u_2(x,\zeta_2;h)=&e^{\frac{ix\cdot \zeta_2}{h}} (a_2(x,\mu_1+i\mu_2)+h r_2(x,\zeta_2;h)),\\
v(x,\zeta_2;h)=&e^{\frac{ix\cdot \zeta_1}{h}} (a_1(x,\mu_1-i\mu_2)+h r_1(x,\zeta_1;h)),
\end{aligned}
\end{equation}
where the amplitudes $a_1(\cdot,\mu_1+i\mu_2),a_2(\cdot,\mu_1-i\mu_2)\in C^\infty(\overline{B(0,R)})$ satisfy the transport equations, 
\begin{equation}
\label{eq_transp_new_1}
((\mu_1+i\mu_2)\cdot \nabla)^m a_2(x,\mu_1+i\mu_2)=0,\   
((\mu_1-i\mu_2)\cdot \nabla)^m a_1(x,\mu_1-i\mu_2)=0\ \textrm{in}\ B(0,R),
\end{equation}
and 
\begin{equation}
\label{eq_remainder_r_j}
\|r_j\|_{H^1_{\textrm{scl}}}=\mathcal{O}(1),\quad j=1,2.
\end{equation}  

Substituting $u_2$ and $v$, given by \eqref{eq_u_2_v_optics}, in \eqref{eq_identity_main}, we get
\begin{equation}
\label{eq_identity_main_obtics}
\begin{aligned}
0=&\int_{B(0,R)}(A^{(2)}-A^{(1)})\cdot \frac{\zeta_2}{h}e^{i x\cdot\xi}(a_2+hr_2)(\bar a_1+h\bar r_1)dx\\
 &+ 
\int_{B(0,R)}(A^{(2)}-A^{(1)})\cdot e^{ix\cdot\xi} (Da_2+hDr_2)(\bar a_1+ h\bar r_1)dx \\
&+\int_{B(0,R)}(q^{(2)}-q^{(1)})e^{ix\cdot \xi}(a_2+hr_2)(\bar a_1+h\bar r_1)dx.
\end{aligned}
\end{equation}
Multiplying \eqref{eq_identity_main_obtics} by $h$ and letting $h\to +0$, we obtain that 
\begin{equation}
\label{eq_recover_A}
(\mu_1+i\mu_2)\cdot \int_{B(0,R)} (A^{(2)}(x)-A^{(1)}(x)) e^{ix\cdot\xi} a_2(x,\mu_1+i\mu_2)\overline{a_1(x,\mu_1-i\mu_2)}dx=0.  
\end{equation}
Here we use \eqref{eq_remainder_r_j} and the fact that 
$a_j\in C^\infty(\overline{B(0,R)})$, $j=1,2$, to conclude that
\begin{align*}
&\bigg|\int_{B(0,R)} r_2\bar r_1dx\bigg|\le \|r_2\|_{L^2}\|r_1\|_{L^2}\le \mathcal{O}(1),\quad
\bigg|\int_{B(0,R)} a_2 \bar r_1dx\bigg|\le \mathcal{O}(1),\\
&\bigg|\int_{B(0,R)} (hD r_2) \bar a_1 dx\bigg|\le \mathcal{O}(1)\|hD r_2\|_{L^2}\le \mathcal{O}(1). 
\end{align*}

Substituting $a_1=a_2=1$ in  \eqref{eq_recover_A}, we have
\begin{equation}
\label{eq_recover_rot}
(\mu_1+i\mu_2)\cdot \int_{B(0,R)} (A^{(2)}-A^{(1)}) e^{ix\cdot\xi} dx=0.
\end{equation}

Similarly to \cite{Salo_diss},  
we conclude from \eqref{eq_recover_rot} that 
\begin{equation}
\label{eq_recover_rot_1}
\p_j(A_k^{(2)}-A_k^{(1)})-\p_k(A_j^{(2)}-A_j^{(1)})=0\quad\textrm{in } B(0,R),\quad 1\le j,k\le n.
\end{equation}
For the convenience of the reader, we recall the arguments of \cite{Salo_diss}. Indeed,
\eqref{eq_recover_rot}  implies that
\begin{equation}
\label{eq_recover_rot_2}
\mu\cdot (\hat{A^{(2)}}(\xi)-\hat{A^{(1)}}(\xi))=0\quad\textrm{for all }\mu, \xi \in\R^n,\ \mu\cdot\xi=0,
\end{equation}
where $\hat{A^{(j)}}$ stands for the Fourier transform of $A^{(j)}$.  Let $\xi=(\xi_1,\dots,\xi_n)$ and for $j\ne k$, $1\le j,k\le n$, consider the vectors $\mu=\mu(\xi,j,k)$ such that $\mu_j=-\xi_k$, $\mu_k=\xi_j$ and all other components of $\mu$ are equal to zero. Thus, $\mu\cdot\xi=0$ and \eqref{eq_recover_rot_2} yields that
\[
\xi_j\cdot (\hat{A_k^{(2)}}(\xi)-\hat{A_k^{(1)}}(\xi))-\xi_k\cdot (\hat{A_j^{(2)}}(\xi)-\hat{A_j^{(1)}}(\xi))=0. 
\]
This proves \eqref{eq_recover_rot_1}. 

It follows from \eqref{eq_recover_rot_1} that the function $\varphi\in C^{1}(\overline{B(0,R)})$, given by
\[
\varphi(x)=\int_0^1 (A^{(2)}(tx)-A^{(1)}(tx))\cdot x dt,
\]
satisfies
\[
A^{(2)}-A^{(1)}=\nabla \varphi \quad \textrm{in}\quad B(0,R).
\]

Since $A^{(2)}-A^{(1)}=0$ in a neighborhood of  the boundary $\p B(0,R)$, we conclude that $\varphi$ is  a constant, say $c\in \C$, on $\p B(0,R)$. Thus, considering $\varphi-c$, we may and shall assume that $\varphi=0$ on $\p B(0,R)$.

Let us now show that $A^{(1)}=A^{(2)}$. To that end, consider \eqref{eq_recover_A} with 
$a_1(x,\mu_1-i\mu_2)=1$ and $a_2(x,\mu_1+i\mu_2)$ satisfying  
\begin{equation}
\label{eq_di_bar_1}
((\mu_1+i\mu_2)\cdot \nabla) a_2(x,\mu_1+i\mu_2)=1\quad \textrm{in} \quad B(0,R). 
\end{equation}
Notice that such a choice is possible thanks to \eqref{eq_transp_new_1}.  
Here we also remark that  \eqref{eq_di_bar_1} is an inhomogeneous $\bar\p$-equation and we  may solve it   by setting
\[
a_2(x,\mu_1+i\mu_2)=\frac{1}{2\pi}\int_{\R^2}\frac{1}{y_1+i y_2}\chi(x-y_1\mu_1-y_2\mu_2)dy_1dy_2,
\]
where $\chi\in C^\infty_0(\R^n)$ is such that $\chi=1$ near $\overline{B(0,R)}$.

We have from \eqref{eq_recover_A},
\[
(\mu_1+i\mu_2)\cdot \int_{B(0,R)} (\nabla\varphi(x))  e^{ix\cdot\xi} a_2(x,\mu_1+i\mu_2) dx=0.  
\]
Integrating by parts and using the facts that  $\varphi =0$ on $\p B(0,R)$ and $\mu_1\cdot\xi=\mu_2\cdot\xi=0$, we obtain that
\[
0=\int_{B(0,R)} \varphi(x)  e^{ix\cdot\xi} ((\mu_1+i\mu_2)\cdot\nabla) a_2(x,\mu_1+i\mu_2) dx=\int_{B(0,R)} \varphi(x)  e^{ix\cdot\xi}dx. 
\]
Thus, $\varphi=0$ in $B(0,R)$, and therefore, $A^{(1)}=A^{(2)}$.

It is now easy to show that $q^{(1)}=q^{(2)}$. To this end, we substitute $A^{(1)}=A^{(2)}$ and $a_1=a_2=1$ in the identity \eqref{eq_identity_main_obtics} and obtain that 
\[
\int_{B(0,R)}(q^{(2)}-q^{(1)})e^{ix\cdot \xi}(1+hr_2)(1+h\bar r_1)dx=0.
\]
Letting $h\to+0$, we get $\hat{q^{(2)}}(\xi)=\hat{q^{(1)}}(\xi)$, for all $\xi\in \R^n$, and therefore, $q^{(1)}=q^{(2)}$ in $B(0,R)$. 
This completes the proof of Theorem \ref{thm_main}.

 \section{Boundary reconstruction and proof of Theorem \ref{thm_main_2} }
 
The proof of Theorem \ref{thm_main_2} will be obtained by reducing it to Theorem \ref{thm_main}, with the help of the following boundary determination result. 
 \begin{prop}
 \label{prop_boundary_reconstruction}
 Let $\Omega\subset \R^n$, $n\ge 3$,  be a bounded domain with  $C^\infty$-boundary, and let $A\in C^{\infty}(\overline{\Omega},\C^n)$ and $q\in C^\infty(\overline \Omega,\C)$.  
The knowledge of the set of the Cauchy data  $\mathcal{C}_{A,q}$ determines the values of $A$ on $\p \Omega$.  
\end{prop}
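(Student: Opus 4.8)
The plan is to recover the boundary values of $A$ from the Cauchy data by testing the Green-type identity against highly oscillatory functions concentrated near a boundary point, following the classical pseudodifferential boundary-reconstruction scheme of \cite{LeeUhl89, NakSunUlm_1995, KKLM}. Fix a point $x_0\in\p\Omega$ and choose boundary normal coordinates in which $x_0=0$, the boundary is $\{x_n=0\}$, $\Omega$ lies in $\{x_n>0\}$, and $\nu=-\p_{x_n}$. First I would record the relevant bilinear integration-by-parts identity: if $\mathcal{L}_{A^{(1)},q^{(1)}}$ and $\mathcal{L}_{A^{(2)},q^{(2)}}$ have the same Cauchy data, then choosing a solution $u_j$ of $\mathcal{L}_{A^{(j)},q^{(j)}}u_j=0$ with matching Dirichlet and Neumann traces and pairing with a solution $v$ of the adjoint equation, the leading bulk term reduces to $\int_\Omega ((A^{(2)}-A^{(1)})\cdot Du_2)\bar v$, exactly as in \eqref{eq_identity_main}. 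The strategy is then to insert \emph{boundary-localized} oscillatory test functions, not the global complex geometric optics solutions of Section~2.

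The key construction is the following: for a large parameter $\lambda$, a tangential direction $\tau\in\R^{n-1}$, and a cutoff $\chi$ supported in a small coordinate neighborhood of $x_0$ with $\chi(x_0)=1$, build approximate solutions of the form
\[
w_\lambda(x)=\chi(x)\,e^{\lambda(i x'\cdot\tau - x_n)}\,\big(\ell(\lambda x_n) + (\text{lower order in }\lambda^{-1})\big),
\]
where $\ell$ is a polynomial in $\lambda x_n$ of degree $m-1$ chosen so that $(-\Delta)^m$ applied to the Gaussian-beam factor $e^{\lambda(ix'\cdot\tau-x_n)}\ell(\lambda x_n)$ produces a remainder that is $o(\lambda^{2m})$ pointwise after the natural rescaling — this is the exact boundary analogue of the transport equation \eqref{eq_transport}, now for the degenerate characteristic direction normal to the boundary. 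One must verify that such $w_\lambda$ can be corrected, using the solvability afforded by elliptic estimates for $(-\Delta)^m$ on the half-space (or by a Neumann-series argument in $\lambda^{-1}$), to genuine solutions of $\mathcal{L}_{A^{(j)},q^{(j)}}w=0$ near $\p\Omega$, and then cut off and corrected globally at the price of an exponentially small or lower-order error. Plugging $u_2=w_\lambda$ built for the pair $(A^{(2)},q^{(2)})$ and $v=w_\lambda'$ built for $(\overline{A^{(1)}},\ldots)$ into the identity, the terms $A^{(2)}\cdot Du_2$ and $q^{(2)}u_2$, $q^{(1)}u_2$ are lower order; the surviving leading term is a multiple of
\[
\lambda^{?}\int_\Omega (A^{(2)}-A^{(1)})\cdot(i\tau - e_n)\,\chi^2\, e^{-2\lambda x_n}\,|\ell(\lambda x_n)|^2\,dx.
\]
Rescaling $x_n\mapsto x_n/\lambda$ and using that $e^{-2\lambda x_n}|\ell(\lambda x_n)|^2$ is an approximate identity in $x_n$ concentrating at $x_n=0$ (with a fixed, explicitly computable positive total mass) while $e^{\lambda(\cdot)}$ in the tangential variables localizes $x'$ near $x_0$, the limit as $\lambda\to\infty$ yields a nonzero constant times $(A^{(2)}(x_0)-A^{(1)}(x_0))\cdot(i\tau-e_n)$. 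Since this vanishes for every tangential $\tau$ and (by varying $x_0$ within the chart / taking the normal component from $e_n$) one recovers both tangential and normal components, we conclude $A^{(1)}(x_0)=A^{(2)}(x_0)$, and as $x_0\in\p\Omega$ was arbitrary, $A^{(1)}=A^{(2)}$ on $\p\Omega$.

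The main obstacle I anticipate is the construction and error control of the boundary-layer quasimodes $w_\lambda$ for the \emph{polyharmonic} operator: unlike the second-order case, the symbol $(-\Delta)^m$ has the half-space characteristic direction as a root of multiplicity $2m$, so one needs a Jordan-block structure in the transport hierarchy — a polynomial amplitude $\ell$ of degree $m-1$ rather than a constant — and one must check that the errors generated by the cutoff $\chi$, by the lower-order perturbation $A\cdot D+q$, and by the curvature of $\p\Omega$ are all of strictly lower order in $\lambda$ than the principal term, uniformly. This is exactly where the $C^\infty$ hypothesis on $A$ and $q$ is used, since the remainder estimates naturally pass through pseudodifferential calculus (or, equivalently, repeated integration by parts requiring many derivatives of the coefficients). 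A secondary technical point is bookkeeping the precise power of $\lambda$ and the precise positive constant in the approximate-identity limit, but this is routine once the ansatz is fixed.
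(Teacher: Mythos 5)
Your approach is genuinely different from the paper's. The paper reduces $\mathcal{L}_{A,q}u=0$ to a second order system for $(u,(-\Delta)u,\dots,(-\Delta)^{m-1}u)$, passes to boundary normal coordinates, and factors the matrix operator as $P(x,D)=(D_{x^n}\otimes I+\cdots-iB)(D_{x^n}\otimes I+iB)$ with $B$ a first order tangential pseudodifferential operator, exactly as in \cite{LeeUhl89, NakSunUlm_1995}. By the Lee--Uhlmann argument the Cauchy data determines $B((x',0),D_{x'})$ modulo smoothing, and the subprincipal symbol $b_0$ of $B$ exhibits both the tangential and normal components of $A|_{\p\Omega}$. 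You instead propose the elementary oscillating test-function method (in the spirit of Brown/Alessandrini/Kohn--Vogelius): plug boundary-layer quasimodes into the Green-type identity and extract $A(x_0)$ from a concentration limit. Both are standard routes, and yours has the merit of avoiding the pseudodifferential machinery and would likely degrade gracefully to lower regularity, while the paper's factorization approach is more systematic and in principle yields the full Taylor jet of $A$ at the boundary rather than just its value.

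That said, a few points in your sketch need tightening. First, the decaying root $\xi_n=i\sqrt{Q_2(x,\xi')}$ of $(\xi_n^2+Q_2(x,\xi'))^m$ has multiplicity $m$, not $2m$ (the other $m$ roots correspond to the growing branch), so the polynomial amplitude $\ell$ of degree $m-1$ is correct but the stated reason is off. Second, and more substantially, your claim that ``$e^{\lambda(\cdot)}$ in the tangential variables localizes $x'$ near $x_0$'' is not right as stated: when $u_2$ and $v$ carry the same tangential phase $e^{i\lambda x'\cdot\tau}$, the product $u_2\bar v$ has no tangential oscillation at all, so all the tangential localization must come from shrinking the support of $\chi$ with $\lambda$ (or from the arbitrariness of $\chi$, letting the limit be a weighted boundary average and then separating variables). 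You should also note explicitly that promoting the quasimode to an exact solution of $\mathcal{L}_{A^{(j)},q^{(j)}}w=0$ uses the well-posedness of the Dirichlet problem, i.e.\ assumption (A), not merely half-space elliptic estimates. Finally, you need to check that the vectors $\{\tau+ie_n : \tau\ \text{unit tangent at}\ x_0\}$ span $\C^n$ over $\C$ so that vanishing of all the limiting pairings forces $A^{(1)}(x_0)=A^{(2)}(x_0)$; this is true but deserves a line, since it is where the normal component is recovered. With these repairs the strategy is sound and gives a legitimate alternative proof of the proposition.
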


 When proving Proposition \ref{prop_boundary_reconstruction}, it will be convenient to rewrite the equation
 \begin{equation}
 \label{eq_L_A_bound}
 \mathcal{L}_{A,q}u=((-\Delta)^m+A\cdot D+q)u=0\quad\textrm{in}\quad \Omega,\quad m\ge 2,
 \end{equation}
 as a  second order system. 
 Introducing 
 \[
 u_1=u,\ u_2=(-\Delta) u,\ \dots,\ u_m=(-\Delta)^{m-1}u,
 \]
 we get
 \begin{equation}
 \label{eq_system_lap}
 ( -\Delta\otimes I +R_1(x,D_{x})+R_0(x))U=0 \quad\textrm{in } \Omega,
 \end{equation}
 where $I$ is the $m\times m$-identity matrix,  $U=(u_1,u_2,\dots, u_m)^{t}$, and
 \[
 R_1(x,D_{x})=\begin{pmatrix} 0& 0 & \hdots & 0\\
 \vdots& \vdots& \hdots& \vdots\\
 0 & 0 & \hdots & 0\\
 A(x)\cdot D_{x} & 0 & \hdots & 0
 \end{pmatrix},
 R_0(x)=\begin{pmatrix} 0 & -1 & 0 & \hdots & 0\\
 0 & 0 & -1 & \hdots & 0\\
 \vdots & \vdots & \vdots & \hdots & \vdots\\
 0 & 0 & 0 & \hdots & -1\\
 q(x) & 0 & 0 & \hdots & 0
 \end{pmatrix}.
 \]
 The set of the Cauchy data for the system \eqref{eq_system_lap} is defined as  follows, 
 \[
\{ (U|_{\p \Omega}, \p_\nu U|_{\p \Omega}): U\in (H^2(\Omega))^m,  \ U\textrm{ solves } \eqref{eq_system_lap} \}.
 \]
 
 When considering the system \eqref{eq_system_lap}  near the boundary, we shall make use of the boundary normal coordinates.  
 Let $(x^1,\dots,  x^n)$ be the boundary normal
coordinates defined locally near a point at the boundary. Here $x'=(x^1,\dots, x^{n-1})$ is a local coordinate for $\p \Omega$ and $x_n\ge 0$ is the distance to the boundary. In these coordinates, the Euclidean metric has the form, see \cite{LeeUhl89}, 
\[
g=\sum_{j,k=1}^{n-1}g_{jk}(x)dx^j dx^k +(dx^n)^2,
\]
and  the Euclidean Laplacian is given by
\[
-\Delta=D_{x^n}^2+i E(x)D_{x^n} +Q(x,D_{x'}).
\]
Here $E\in C^\infty$ and $Q(x,D_{x'})$ is a second order differential operator in $D_{x'}$, depending smoothly on $x^n\ge 0$. The principal part $Q_2$ of $Q$ is given by
\[
Q_2(x,D_{x'})=\sum_{j,k=1}^{n-1}g^{jk}(x)D_{x^j}D_{x^k}.
\]
  Here $(g^{jk})$ is the inverse of the matrix $(g_{jk})$.  We remark that here $E(x)$ and $Q(x,D_{x'})$ are known. 
  It follows therefore that  the set of the Cauchy data for the system 
 \eqref{eq_system_lap} and the set of the Cauchy data for the equation \eqref{eq_L_A_bound} coincide.

In the boundary normal coordinates, the operator in the system \eqref{eq_system_lap} has the form,
 \begin{equation}
 \label{eq_system_b_n}
 P(x,D):=(D_{x^n}^2+i E(x)D_{x^n} +Q(x,D_{x'}))\otimes I+ \tilde R_1^{(1)}(x,D_{x'})+\tilde R_1^{(2)}(x) D_{x^n}+\tilde R_0(x).
 \end{equation}
The matrix $\tilde R_0$ is just $R_0$, expressed in the boundary normal coordinates, $\tilde R_1^{(1)}$ and $\tilde R_1^{(2)}$ are $m\times m$ matrices all of whose entries are zero, except for the entry $(m,1)$, which is equal to $\sum_{j=1}^{n-1}\tilde A_jD_{x^j}$ and $\tilde A_n$, respectively. The functions $\tilde A_j$, $j=1,\dots,n$, are the components of the vector field $A$ in the boundary normal coordinates.

 We have the following result, which is a direct analog of  \cite{LeeUhl89, NakSunUlm_1995}.  
  
 \begin{prop}
 \label{prop_factorization}
 There is a matrix-valued pseudodifferential operator $B(x, D_{x'})$ of order one in $x'$ depending smoothly on $x^n$ such that 
\begin{equation}
\label{eq_factor}
P(x,D)=(D_{x^n}\otimes I + iE(x)\otimes I + \tilde R_1^{(2)}(x)- i B(x,D_{x'}) )(D_{x^n}\otimes I+ iB(x,D_{x'})),
\end{equation}
modulo a smoothing operator. Here $B(x,D_{x'})$ is unique modulo a smoothing term, if we require that its principal symbol is given by
$-\sqrt{Q_2(x,\xi')}I $. 

\end{prop}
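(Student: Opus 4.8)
The plan is to construct the operator $B(x,D_{x'})$ by a standard symbol-recursion argument, matching the symbol of the right-hand side of \eqref{eq_factor} with that of $P(x,D)$ order by order. First I would expand the proposed factorization: writing out the composition on the right of \eqref{eq_factor} and using that $D_{x^n}\otimes I$ commutes with $x^n$-independent objects but \emph{not} with $B(x,D_{x'})$ (because $B$ depends on $x^n$), one gets
\[
(D_{x^n}\otimes I + iB)(D_{x^n}\otimes I + iB)\text{-type terms} \;=\; D_{x^n}^2\otimes I + i[D_{x^n},B] - B\circ B + (\text{lower-order interplay with } E,\ \tilde R_1^{(2)}).
\]
Comparing with \eqref{eq_system_b_n}, the condition on $B$ becomes, schematically,
\[
-B(x,D_{x'})\circ B(x,D_{x'}) + i\,(D_{x^n}B)(x,D_{x'}) + (\text{terms linear in } B \text{ with coefficients } E,\tilde R_1^{(2)}) \;\equiv\; Q(x,D_{x'})\otimes I + \tilde R_1^{(1)}(x,D_{x'})+\tilde R_0(x)
\]
modulo smoothing. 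I would then solve this for the full symbol $b \sim \sum_{j\le 1} b_{1-j}$ of $B$ recursively in decreasing order, using the composition formula $\sigma(B\circ B)\sim \sum_\alpha \frac{1}{\alpha!}\partial_{\xi'}^\alpha b\, D_{x'}^\alpha b$.

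The key steps, in order, are: (1) at the principal level (order two in the equation), the composition term $-b_1\# b_1$ contributes $-b_1^2$, which must equal the principal symbol $Q_2(x,\xi')I = \big(\sum g^{jk}\xi_j\xi_k\big)I$; hence $b_1 = -\sqrt{Q_2(x,\xi')}\,I$, picking the branch so that $D_{x^n}\otimes I + iB$ is (microlocally) the "outgoing" factor — this fixes the normalization asserted in the statement and shows uniqueness of $b_1$. (2) At order one, I collect all terms of homogeneity one: $-(b_1 b_0 + b_0 b_1) - \frac{1}{i}\partial_{\xi'}b_1\cdot D_{x'}b_1 + i D_{x^n} b_1 + (\text{order-one part of } Q) + \tilde R_1^{(1)} = 0$; since $b_1$ is an invertible scalar multiple of $I$ away from $\xi'=0$, the map $b_0\mapsto b_1 b_0 + b_0 b_1 = -2\sqrt{Q_2}\,b_0$ is invertible, so $b_0$ is uniquely determined — and this is exactly where the off-diagonal matrix entries $\tilde A_j D_{x^j}$ and the matrix structure of $\tilde R_0$ first enter $b_0$. (3) For general order $1-j$, $j\ge 2$, the same invertibility of $b_0\mapsto b_1 b_0 + b_0 b_1$ lets me solve for $b_{1-j}$ in terms of the already-constructed $b_1,\dots,b_{2-j}$, the derivatives of $Q$, and $E,\tilde R_1^{(2)},\tilde R_0$; Borel summation then produces a genuine symbol $b$ whose operator $B$ satisfies \eqref{eq_factor} modulo a smoothing operator. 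Uniqueness modulo smoothing follows because at every stage the correction is uniquely determined once the principal symbol is normalized.

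The main obstacle — really the only subtle point beyond bookkeeping — is handling the matrix (non-commutative) structure correctly: unlike the scalar case of \cite{LeeUhl89}, $b_0$ and higher terms are genuinely matrix-valued and need not commute with each other, so I must be careful that the relevant linear operator $X\mapsto b_1 X + X b_1$ is the one being inverted (which works precisely because $b_1$ is a \emph{scalar} multiple of the identity, so this operator is just multiplication by $2b_1$), and that the nilpotent off-diagonal contributions from $\tilde R_1^{(1)}$, $\tilde R_1^{(2)}$, and $\tilde R_0$ are slotted into the correct homogeneity level. A secondary technical care is that $Q(x,D_{x'})$ and $B(x,D_{x'})$ have coefficients depending on the parameter $x^n\ge 0$, so the symbol calculus must be carried out with smooth dependence on $x^n$ and the term $iD_{x^n}B$ retained throughout; this is routine but must be tracked. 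Once \eqref{eq_factor} is established, it is the factorization that will be used in the proof of Proposition~\ref{prop_boundary_reconstruction} to peel off the Dirichlet-to-Neumann-type operator $B|_{x^n=0}$ and read off $\tilde A_n$, and hence $A$, on $\p\Omega$.
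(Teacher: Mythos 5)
Your approach is essentially the same as the paper's: derive an operator equation for $B$ by multiplying out \eqref{eq_factor} and comparing with \eqref{eq_system_b_n}, then solve for the symbol $b\sim\sum_{j\le 1}b_j$ recursively, using that $b_1$ is a scalar multiple of $I$ to make the matrix structure harmless (the linear map $X\mapsto b_1X+Xb_1=2b_1X$ is just scalar multiplication), and finish with Borel summation. One algebraic slip to fix: expanding \eqref{eq_factor}, the quadratic term is $+B\circ B$, not $-B\circ B$, because the first factor carries $-iB$ while the second carries $+iB$, so $(-iB)(iB)=+B^2$; the degree-two equation is thus $b_1^2=Q_2\,I$, which is what actually gives $b_1=-\sqrt{Q_2}\,I$ — your stated $-b_1^2=Q_2 I$ would force $b_1$ to be imaginary and is inconsistent with the answer you (correctly) wrote down. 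The downstream signs in your degree-one equation inherit the same slip but do not change the structure of the recursion.
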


\begin{proof}

Combining \eqref{eq_system_b_n} and \eqref{eq_factor}, we see that we should have
\begin{equation}
\label{eq_factor_2}
\begin{aligned}
B^2(x,D_{x'})+i[D_{x^n}\otimes I,B(x,D_{x'})]-E(x)B(x,D_{x'})+i\tilde R^{(2)}_1(x)B(x,D_{x'})\\
=Q(x,D_{x'})\otimes I +\tilde R^{(1)}_1(x,D_{x'})+\tilde R_0(x),
\end{aligned}
\end{equation}
modulo a smoothing operator. Let us write the full symbol of $B(x,D_{x'})$ as follows,
\[
B(x,\xi')\sim \sum_{j=-\infty}^1 b_j(x,\xi'),
\]
with $b_j$ taking values in $m\times m$ matrices with entries homogeneous of degree $j$ in $\xi'=(\xi_1,\dots,\xi_{n-1})\in\R^{n-1}$. Thus, 
\eqref{eq_factor_2}  implies that
\begin{align}
\label{eq_lap_4}
\sum_{l=-\infty}^2\underset{|\alpha|\ge 0,\ j,k\le 1}{(\sum_{j+k-|\alpha|=l}}\frac{1}{\alpha!}\p_{\xi'}^\alpha b_jD_{x'}^\alpha b_k)
-E(x)\sum_{j=-\infty}^1 b_j(x,\xi')+\sum_{j=-\infty}^1 \p_{x^n}b_j(x,\xi')\\ \nonumber
+i \tilde R^{(2)}_1(x)\sum_{j=-\infty}^1b_j(x,\xi')
=
Q_2(x,\xi') I+Q_1(x,\xi') I + \tilde R^{(1)}_1(x,\xi')+\tilde R_0(x).
\end{align}
Here $Q_1(x,\xi')=Q(x,\xi')-Q_2(x,\xi')$ is homogeneous of degree one  in $\xi'$. 
Equating the terms homogeneous of degree two in \eqref{eq_lap_4}, we get
\[
b_1^2(x,\xi')=Q_2(x,\xi')I,
\] 
so we should choose $b_1(x,\xi')$
to be the scalar matrix given by 
\begin{equation}
\label{eq_b_1}
b_1(x,\xi')=-\sqrt{Q_2(x,\xi')}I. 
\end{equation}
Equating the terms homogeneous of degree one in \eqref{eq_lap_4},  we have an equation for $b_0(x,\xi')$,
\begin{equation}
\label{eq_b_0}
2b_1b_0+\sum_{|\alpha|=1}\p_{\xi'}^\alpha b_1D_{x'}^\alpha b_1- Eb_1+i \tilde R^{(2)}_1b_1+\p_{x_n}b_1=Q_1(x,\xi')+\tilde R^{(1)}_1(x,\xi'),
\end{equation}
which has a unique solution. 
The terms $b_j$, $j\le -1$, are chosen in a similar fashion, by equating terms of degree of homogeneity $j+1$ in   \eqref{eq_lap_4}. This completes the proof.

\end{proof}

Using Proposition \ref{prop_factorization} and the same argument as in the proof of \cite[Proposition 1.2]{LeeUhl89}, we conclude that the set of the Cauchy data for the system  \eqref{eq_system_lap} determines the operator $B((x',0),D_{x'})$ modulo smoothing. It follows therefore, from \eqref{eq_b_1} and  \eqref{eq_b_0} that the set of the Cauchy data determines the expressions,
\[
i\tilde A_n(x',0)\sqrt{Q_2((x',0),\xi')}+\sum_{j=1}^{n-1}\tilde A_j(x',0)\xi_j,\quad \xi'\in \R^{n-1}.  
\]
This completes the proof of Proposition \ref{prop_boundary_reconstruction}.

Let $\tilde \Omega$ be a bounded domain in $\R^n$ with $C^\infty$-boundary such that $\Omega\subset\subset \tilde\Omega$.  Then by Proposition \ref{prop_boundary_reconstruction} we can extend $A^{(j)}$, $j=1,2$, to compactly supported vector fields  in $W^{1,\infty}(\tilde \Omega, \C^n)$ such that $A^{(1)}=A^{(2)}$ in $\tilde \Omega\setminus\Omega$.  Let us also extend $q^{(j)}$, $j=1,2$, to $\tilde \Omega$ by zero. Theorem \ref{thm_main_2} follows therefore  combining  Proposition
\ref{prop_extension_larger_dom} and Theorem \ref{thm_main}.

\section{Proof of Theorem \ref{thm_main_3}} 

Let $\Omega\subset\R^n$, $n\ge 3$, be a bounded domain in $\R^n$ with connected $C^\infty$-boundary. In this case, arguing as in  Section \ref{sec_thm1},  
see \eqref{eq_recover_A}, we obtain that 
\begin{equation}
\label{eq_recover_A_conv}
(\mu_1+i\mu_2)\cdot \int_{\Omega} (A^{(2)}(x)-A^{(1)}(x)) e^{ix\cdot\xi} a_2(x,\mu_1+i\mu_2)\overline{a_1(x,\mu_1-i\mu_2)}dx=0,
\end{equation}
for any $\mu_1,\mu_2,\xi\in \R^n$ such that $\mu_1\cdot \xi=\mu_2\cdot\xi=\mu_1\cdot \mu_2=0$ and $|\mu_1|=|\mu_2|=1$. Here 
$a_1(\cdot,\mu_1+i\mu_2),a_2(\cdot,\mu_1-i\mu_2)\in C^\infty(\overline{\Omega},\C)$ satisfy the transport equations, 
\[
((\mu_1+i\mu_2)\cdot \nabla)^m a_2(x,\mu_1+i\mu_2)=0\  \textrm{and}\ 
((\mu_1-i\mu_2)\cdot \nabla)^m a_1(x,\mu_1-i\mu_2)=0\ \textrm{in}\ \Omega.
\]
 We can go back to Section \ref{sec_thm1} and repeat the construction of complex geometric optics solutions, with vectors 
$\zeta_1$ and $\zeta_2$, given by \eqref{eq_choice_zeta}, where $\mu_2$ is replaced by $-\mu_2$.  In this way, instead of \eqref{eq_recover_A_conv} we arrive at
\begin{equation}
\label{eq_recover_A_conv_2}
(\mu_1-i\mu_2)\cdot \int_{\Omega} (A^{(2)}(x)-A^{(1)}(x)) e^{ix\cdot\xi} a_2(x,\mu_1-i\mu_2)\overline{a_1(x,\mu_1+i\mu_2)}dx=0,
\end{equation}
where
$a_1(\cdot,\mu_1-i\mu_2),a_2(\cdot,\mu_1+i\mu_2)\in C^\infty(\overline{\Omega},\C)$ satisfy the transport equations, 
\[
((\mu_1-i\mu_2)\cdot \nabla)^m a_2(x,\mu_1-i\mu_2)=0\  \textrm{and}\ 
((\mu_1+i\mu_2)\cdot \nabla)^m a_1(x,\mu_1+i\mu_2)=0\ \textrm{in}\ \Omega.
\]
Moreover, choosing $a_2=a_1=1$ in \eqref{eq_recover_A_conv} and using Proposition \ref{prop_existence_varphi} in the Appendix, we get
\[
A^{(2)}-A^{(1)}=\nabla \varphi,
\]
where $\varphi\in C^{1}(\overline{\Omega})$. 

Choosing next $a_1(\cdot,\mu_1-i\mu_2)=a_1(\cdot,\mu_1+i\mu_2)=1$ and letting $a_2(\cdot,\mu_1+i\mu_2)$, $a_2(\cdot,\mu_1-i\mu_2)\in C^\infty(\overline{\Omega},\C)$ be such that
\[
((\mu_1+i\mu_2)\cdot \nabla) a_2(x,\mu_1+i\mu_2)=0\  \textrm{and}\ 
((\mu_1-i\mu_2)\cdot \nabla) a_2(x,\mu_1-i\mu_2)=0\ \textrm{in}\ \Omega,
\]
we conclude from \eqref{eq_recover_A_conv}
and \eqref{eq_recover_A_conv_2} that 
\begin{equation}
\label{eq_recover_A_conv_3}
(\mu_1\pm i\mu_2)\cdot \int_{\Omega} (\nabla \varphi) e^{ix\cdot\xi} a_2(x,\mu_1 \pm i\mu_2)dx=0.
\end{equation}

\begin{prop}
The function $\varphi$ is constant along the connected set $\p \Omega$.

\end{prop}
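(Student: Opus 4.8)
The plan is to deduce the claim from \eqref{eq_recover_A_conv_3} by showing that the normal derivative of $\varphi$ restricted to $\p\Omega$ is forced to vanish after integration against a suitable family of test functions, and then to exploit the connectedness of $\p\Omega$ to upgrade ``$\varphi$ has vanishing tangential gradient on $\p\Omega$'' to ``$\varphi$ is constant on $\p\Omega$''. Concretely, in \eqref{eq_recover_A_conv_3} I would integrate by parts. Since $a_2(\cdot,\mu_1\pm i\mu_2)$ solves the first-order transport equation $((\mu_1\pm i\mu_2)\cdot\nabla)a_2=0$ and $\mu_1\cdot\xi=\mu_2\cdot\xi=0$, the bulk term $\int_\Omega \varphi\, e^{ix\cdot\xi}\,(\mu_1\pm i\mu_2)\cdot\nabla(a_2 e^{ix\cdot\xi})\,dx$ vanishes, so \eqref{eq_recover_A_conv_3} collapses to a pure boundary integral
\[
\int_{\p\Omega} \varphi(x)\, e^{ix\cdot\xi}\, \big((\mu_1\pm i\mu_2)\cdot\nu(x)\big)\, a_2(x,\mu_1\pm i\mu_2)\, dS(x)=0 .
\]
This holds for all admissible triples $(\mu_1,\mu_2,\xi)$ and for all transport-equation amplitudes $a_2$, giving a large family of constraints on $\varphi|_{\p\Omega}$.

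Next I would argue that $\varphi|_{\p\Omega}$ is locally constant. Fix a point $p\in\p\Omega$ and local coordinates; choose $\mu_1,\mu_2$ spanning a complex-linear direction transverse to $T_p(\p\Omega)$ so that $(\mu_1\pm i\mu_2)\cdot\nu(p)\ne 0$, and use the freedom in the amplitude $a_2$ — which is essentially an antiholomorphic function in the complex variable $y_1+iy_2$ along the $2$-plane spanned by $\mu_1,\mu_2$ — together with the freedom in $\xi$ (which ranges over the $(n-2)$-plane orthogonal to both $\mu_1$ and $\mu_2$) to produce a rich enough set of test functions restricted to $\p\Omega$. Pairing the relation above for the two choices of sign and combining with the identity $A^{(2)}-A^{(1)}=\nabla\varphi$ already established, one sees that the tangential derivatives of $\varphi$ along $\p\Omega$ must all vanish; equivalently $d(\varphi|_{\p\Omega})=0$. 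This is the step I expect to be the main obstacle: one must check that the amplitudes $a_2$ solving the transport equation, after restriction to the boundary, separate points sufficiently, and that rotating the pair $(\mu_1,\mu_2)$ sweeps out every tangential direction at every boundary point. The linear-algebra bookkeeping (which complex planes are attainable, and that the attainable tangential covectors span $T^*_p(\p\Omega)$ for each $p$) is where the care is needed, but it is of the same flavor as the interior argument already carried out in Section~\ref{sec_thm1}.

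Finally, having shown $d(\varphi|_{\p\Omega})=0$, i.e. $\varphi|_{\p\Omega}$ is locally constant, connectedness of $\p\Omega$ immediately gives that $\varphi$ equals a single constant $c\in\C$ on all of $\p\Omega$. That is the assertion of the proposition. (Subtracting this constant, one may then continue as in Section~\ref{sec_thm1}: $\varphi\in C^1(\overline\Omega)$ with $\varphi|_{\p\Omega}=0$, $A^{(2)}-A^{(1)}=\nabla\varphi$, and a further choice of amplitude with $((\mu_1+i\mu_2)\cdot\nabla)a_2=1$ in \eqref{eq_recover_A_conv} forces $\widehat\varphi(\xi)=0$ for all $\xi$, hence $\varphi\equiv 0$ and $A^{(1)}=A^{(2)}$; $q^{(1)}=q^{(2)}$ then follows as before. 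But only the boxed claim — $\varphi$ constant along $\p\Omega$ — needs to be proved here.)
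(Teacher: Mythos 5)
Your derivation of the boundary identity
\[
\int_{\p\Omega}\varphi(x)\,e^{ix\cdot\xi}\,\big((\mu_1\pm i\mu_2)\cdot\nu(x)\big)\,a_2(x,\mu_1\pm i\mu_2)\,dS(x)=0
\]
is correct and parallels the paper's starting point. But the crucial step---passing from this family of integral constraints to $d(\varphi|_{\p\Omega})=0$---is precisely what you identify as ``the main obstacle'' and then do not carry out. That is a genuine gap, not a bookkeeping detail: the vanishing of these boundary integrals does not imply vanishing tangential gradient by any straightforward density argument, since the admissible test functions are restrictions to $\p\Omega$ of products of a transport-equation amplitude with a Fourier character and the geometric factor $(\mu_1\pm i\mu_2)\cdot\nu$, which can vanish on pieces of $\p\Omega$ and which is far from a spanning family in any obvious sense. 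You would need to justify why these separate tangential directions at each boundary point, and no mechanism is offered.

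The paper's proof supplies exactly the missing mechanism, and it is complex-analytic rather than a density argument. After changing to coordinates adapted to $\mu_1,\mu_2$ so that $(\mu_1\pm i\mu_2)\cdot\nabla=2\p_{\bar z}$ or $2\p_z$, it takes the inverse Fourier transform in $\xi''$ to localize \eqref{eq_recover_A_conv_3} onto almost every two-dimensional slice $\Omega_{y''}$ (Sard's theorem guarantees the slices have smooth boundary), and then applies Stokes' theorem in the slice to obtain $\int_{\p\Omega_{y''}}\tilde\varphi\,g\,dz=0$ for all holomorphic $g$, together with the conjugate relation. Combining the two, the Cauchy integral of $\Re\tilde\varphi|_{\p\Omega_{y''}}$ vanishes outside $\Omega_{y''}$; the Plemelj--Sokhotski--Privalov jump formula then gives a holomorphic extension $F$ with $F|_{\p\Omega_{y''}}=\Re\tilde\varphi$, and since $\Im F$ is harmonic with zero boundary values, $F$ is real-valued, hence constant on each component. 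The same works for $\Im\tilde\varphi$. Only then does one vary the slicing direction (taking $\mu_1=e_j$, $\mu_2=e_n$) to sweep out a basis of tangential vector fields on $\p\Omega$ and conclude $\varphi$ is locally constant; connectedness finishes it. This one-complex-variable step is the heart of the proposition and must be reproduced, or replaced by an equally substantive argument, before your proposal can be accepted.
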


\begin{proof}

Completing the orthonormal family $\mu_1,\mu_2$ to an orthonormal basis in $\R^n$, $\mu_1,\mu_2,\mu_3,\dots,\mu_n$, we have for any vector $x\in\R^n$, 
\[
x=\sum_{j=1}^{n}(x\cdot\mu_j)\mu_j.
\]
We introduce new linear coordinates in $\R^n$, given by the orthogonal transformation $T:\R^n\to \R^n$,  $T(x)=y$, $y_j=x\cdot \mu_j$, $j=1,\dots,n$, and denote 
\[
z=y_1+iy_2,\quad \p _{\bar z}=\frac{1}{2}(\p_{y_1}+i\p_{y_2}),\quad \p _{z}=\frac{1}{2}(\p_{y_1}-i\p_{y_2}).
\]
Thus,
\[
(\mu_1+i\mu_2)\cdot \nabla=2\p_{\bar z},\quad (\mu_1-i\mu_2)\cdot \nabla=2\p_{z}. 
\]
Hence, changing variables in \eqref{eq_recover_A_conv_3}, we get
\begin{equation}
\label{eq_T_omega}
 \int_{T(\Omega)} (\p_{\bar z}\tilde \varphi(y)) e^{iy\cdot \xi} g_1(y)dy=0,\quad \int_{T(\Omega)} (\p_{z}\tilde \varphi(y)) e^{iy\cdot \xi} g_2(y)dy=0,
\end{equation}
for all $\xi=(0,0,\xi'')$, $\xi''\in \R^{n-2}$, and all $g_1,g_2\in C^\infty(\overline{T(\Omega)})$ such that $\p_{\bar z} g_1=0$, $\p_z g_2=0$.  Here $\tilde \varphi(y):=\varphi(T^{-1}y)$. 
Taking the inverse Fourier transform in \eqref{eq_T_omega} with respect to the  variable $\xi''$, we get, for all $y''\in \R^{n-2}$,
\begin{equation}
\label{eq_T_omega_2}
\int_{\Omega_{y''}} (\p_{\bar z}\tilde \varphi) g_1(y)dy_1dy_2=0,\quad \int_{\Omega_{y''}} (\p_{z}\tilde \varphi)  g_2(y)dy_1dy_2=0,
\end{equation}
where $\Omega_{y''}=T(\Omega)\cap \Pi_{y''}$ and $\Pi_{y''}=\{(y_1,y_2,y''):(y_1,y_2)\in\R^2\}$.

Let us show that for almost  all    $y''\in \R^{n-2}$, the boundary of the set $\Omega_{y''}$ is $C^\infty$-smooth. To that end, consider the function  $f=(f_1,\dots, f_{n-2}):\p T(\Omega)\to \R^{n-2}$, given by  $(y_1,y_2,y'')\mapsto y''$. 
Let $\rho\in C^\infty(\R^n, \R)$ be a defining function of  $T(\Omega)$, i.e. 
$T(\Omega)=\{y\in \R^n:\rho(y)<0\}$ and $\nabla \rho(y)\ne 0$ for all $y\in \p T(\Omega)$. Then a point $y_0\in\p T(\Omega)$ is a critical point of the function $f$  precisely when the vectors $\nabla f_1(y_0),\dots,\nabla f_{n-2}(y_0),\nabla \rho(y_0)$ are linearly dependent. 
The latter holds precisely when 
$(\p_{y_1}\rho(y_0),\p_{y_2}\rho(y_0))=0$.
Since $\Omega_{y''}=\{(y_1,y_2):\rho(y_1,y_2,y'')<0\}$, it follows, when $y''$ is not a critical value of $f$, that the boundary of $\Omega_{y''}$ is $C^\infty$-smooth. 
By Sard's theorem, the set of critical values of the function $f$ is of measure zero and therefore,  for almost  all    $y''\in \R^{n-2}$, the boundary of the set $\Omega_{y''}$ is $C^\infty$-smooth.

Using Stokes'  theorem,  it follows from \eqref{eq_T_omega_2} that for almost all $y''$, 
\begin{equation}
\label{eq_T_omega_3}
\int_{\p \Omega_{y''}}\tilde \varphi g_1dz=0,\quad \int_{\p \Omega_{y''}}\tilde \varphi g_2d\bar z=0,
\end{equation}
for any $g_1\in C^\infty(\overline{\Omega_{y''}})$ holomorphic and $g_2\in C^\infty(\overline{\Omega_{y''}})$ antiholomorphic functions in $\Omega_{y''}$. 
In particular, taking $g_2=\bar g_1$ in \eqref{eq_T_omega_3}, we have
\[
\int_{\p \Omega_{y''}}\tilde \varphi \bar g_1d\bar z=0, \textrm{ and therefore, } 
\int_{\p \Omega_{y''}}\overline{\tilde \varphi}  g_1dz=0.
\]
Hence,
\[
\int_{\p \Omega_{y''}}(\Re \tilde \varphi ) g d z=0,\quad \int_{\p \Omega_{y''}}(\Im \tilde\varphi)  g d z=0,
\]
for any holomorphic function $g\in C^ \infty(\overline{\Omega_{y''}})$.

Next we shall show that for almost all $y''$,  $\Re\tilde \varphi$ and $\Im\tilde \varphi$ are constant along the boundary of each connected component of $\Omega_{y''}$. 
Indeed, the fact that 
\begin{equation}
\label{eq_T_omega_4}
\int_{\p \Omega_{y''}}(\Re \tilde \varphi ) g d z=0,
\end{equation}
for any holomorphic function $g\in C^\infty(\overline{\Omega_{y''}})$, implies that there exists a holomorphic function $F\in C(\overline{\Omega_{y''}})$ such that $F|_{\p \Omega_{y''}}=\Re \tilde \varphi|_{\p \Omega_{y''}}$. 
To see this, we shall follow the proof of \cite[Lemma 5.1]{DKSU_2007}, and let  $F$  be the Cauchy integral of $\Re \tilde \varphi|_{\p \Omega_{y''}}$, i.e.,
\[
F(z)=\frac{1}{2\pi i}\int_{\p \Omega_{y''}}\frac{\Re\tilde  \varphi(\zeta)}{\zeta-z}d\zeta,\quad z\in \C\setminus \p \Omega_{y''}.
\]   
The function $F$ is holomorphic inside and outside $\p \Omega_{y''}$. As $\varphi\in C^1$, the Plemelj-Sokhotski-Privalov formula states that
\begin{equation}
\label{eq_T_omega_5}
\lim_{z\to z_0, z\in  \Omega_{y''}} F(z)-\lim_{z\to z_0,z\not\in \Omega_{y''}}F(z)=\Re \tilde  \varphi(z_0),\quad z_0\in \p \Omega_{y''}. 
\end{equation}
The function $\zeta\mapsto (\zeta-z)^{-1}$ is holomorphic on $\Omega_{y''}$ when $z\not\in\Omega_{y''}$. Thus, \eqref{eq_T_omega_4} implies that $F(z)=0$ when $z\not\in\Omega_{y''}$. 
Hence, the second limit in \eqref{eq_T_omega_5} is zero and therefore, $F$ is a holomorphic function on $\Omega_{y''}$ whose restriction to the boundary agrees with $\Re \tilde \varphi$.  

Moreover, $\Delta \Im F=0$ in $\Omega_{y''}$ and $\Im F|_{\p \Omega_{y''}}=0$. Hence, $F$ is real-valued and therefore, is  constant on each connected component of $\Omega_{y''}$.  Hence, $\Re \tilde \varphi$ is constant along the boundary of each connected component of $\Omega_{y''}$,  for almost all $y''$. 
In the same way, one shows that  $\Im\tilde \varphi$Ê is constant along  the boundary of each connected component of $\Omega_{y''}$,  for almost all $y''$.

Going back to the $x$-coordinates, we conclude that the function $\varphi(x)$ is constant  along  the boundary of each connected component of the section $T^{-1}(\Omega_{y''})=\Omega\cap T^{-1}(\Pi_{y''})$,  for almost all $y''\in \R^{n-2}$, where the two-dimensional plane $T^{-1}(\Pi_{y''})$ is given by
\begin{equation}
\label{eq_plane_T}
T^{-1}(\Pi_{y''})=\bigg\{x=y_1\mu_1+y_2\mu_2+\sum_{j=3}^ny_j\mu_j:y_1,y_2\in \R,y''=(y_3,\dots,y_n)-\textrm{fixed}\bigg\}. 
\end{equation}
Here $\mu_1,\mu_2\in \R^n$ are arbitrary vectors such that $\mu_1\cdot\mu_2=0$ and $|\mu_1|=|\mu_2|=1$. 

Let us now prove that $\varphi$ is constant along the boundary $\p \Omega$. The fact that $\varphi$ is constant along the boundary of each connected component of $ T^{-1}(\Omega_{y''})$, for some $y''$, means that $X\varphi=0$ when $X$ is a tangential vector field to  $\p T^{-1}(\Omega_{y''})$.  Let $a\in \p \Omega$. Then to show that $\varphi$ is a constant in a neighborhood of $a$,  one has to establish that  
\[
X_{j}\varphi=0, \quad j=1,\dots,n-1,
\]
 where $X_1,\dots,X_{n-1}$  is a basis of tangential vector fields to $\p \Omega$ in a  neighborhood of $a$. 

Without loss of generality,  we may assume that locally near  $a=(a',a_n)\in \p \Omega$, $a'=(a_1,\dots,a_{n-1})$, the boundary $\p \Omega$ has the form $x_n=\phi(x_1,\dots,x_{n-1})$ where $\phi$  is $C^\infty$ near $a'$.  Then for $x'$ in a neighborhood of $a'$,  the vectors
\begin{equation}
\label{eq_X_j}
X_1(x')=\begin{pmatrix} 1\\
0\\
\vdots\\
0\\
\p_{x_1}\phi(x')
\end{pmatrix},
X_2(x')=\begin{pmatrix} 0\\
1\\
\vdots\\
0\\
\p_{x_2}\phi(x')
\end{pmatrix},\dots,
X_{n-1}(x')=\begin{pmatrix} 0\\
0\\
\vdots\\
1\\
\p_{x_{n-1}}\phi(x')
\end{pmatrix},
\end{equation}
  form a basis of tangential vector fields to $\p \Omega$ in a  neighborhood of $a$.  
 
 Let us show that for any $X_j$, $j=1,\dots,n-1$, given in \eqref{eq_X_j},  one can find $\mu_1$, $\mu_2$ and $y''$ in \eqref{eq_plane_T}, such that $X_j$ is tangential to $\p T^{-1}(\Omega_{y''})$.  
 Choosing first $\mu_1=e_1$, $\mu_2=e_n$,  $\mu_3=e_2,\dots,\mu_n=e_{n-1}$, and $y''=(x_2,\dots,x_{n-1})$ fixed near $(a_2,\dots,a_{n-1})$, we have that  in a neighborhood of $a$, $\p T^{-1}(\Omega_{y''})$ is smooth and is  given by 
 $x_n=\phi(x_1,x_2,\dots,x_{n-1})$. The tangential vector field to $\p T^{-1}(\Omega_{y''})$ is precisely $X_1(x')$, and therefore, $X_1\varphi=0$ near $a$. Proceeding similarly and choosing $\mu_1=e_j$, $j=2,\dots,n-1$, and $\mu_2=e_n$,  we obtain that  the vector fields $X_j$, given in  \eqref{eq_X_j}, are  
  tangential to the boundaries of the corresponding sections of $\Omega$. It follows that $\varphi$ is locally constant along the boundary $\p \Omega$,  and since $\p \Omega$ is connected, we conclude that $\varphi$ is constant along $\p \Omega$. The proof is complete.

\end{proof}

To finish the proof of Theorem \ref{thm_main_3},  we argue in the same way as at the end of Section \ref{sec_thm1}. The proof of Theorem \ref{thm_main_3} is complete.

\begin{appendix}

\section{Characterizing curl--free vector fields} 

Let $\Omega\subset \R^n$, $n\ge 2$, be a bounded domain with $C^1$ boundary and let us denote by $C^{0,1}(\overline{\Omega})$ the space of Lipschitz continuous functions in $\Omega$ and by $C^{1,1}(\overline{\Omega})=\{\varphi\in C^1(\Omega): \nabla \varphi\in C^{0,1}(\overline{\Omega})\}$.

The following result, used in Section 5 in the reconstruction of the vector field part of the perturbation, may be of some independent interest. 
Notice that it holds without any topological assumptions on the domain $\Omega\subset\R^n$. 

\begin{prop}
\label{prop_existence_varphi}
Let $\Omega\subset \R^n$, $n\ge 2$, be a bounded domain with $C^1$ boundary,  $A\in W^{1,\infty}(\Omega,\C^n)$, and 
\begin{equation}
\label{eq_prop_magnetic}
\mu\cdot \int_\Omega e^{ix\cdot \xi} A(x)dx=0, \quad \textrm{for all } \xi,\mu\in \R^n,\ \xi\cdot\mu=0. 
\end{equation}
Then there exists $\varphi\in C^{1,1}(\overline{\Omega})$ such that $A=\nabla\varphi$. 
\end{prop}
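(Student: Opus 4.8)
The plan is to deduce from the hypothesis \eqref{eq_prop_magnetic} that $A$ is curl-free in the distributional sense and then produce the potential $\varphi$ by a line-integral formula, handling the (non-simply-connected) domain by first extending $A$ to all of $\R^n$. More precisely, I would first observe that \eqref{eq_prop_magnetic} is a statement about the Fourier transform $\widehat{A}$ of the \emph{compactly supported} extension of $A$ by zero to $\R^n$ (note $A\in W^{1,\infty}(\Omega,\C^n)$ on a bounded Lipschitz domain extends, say after multiplying by a cutoff, to a function in $W^{1,\infty}(\R^n,\C^n)$ with compact support; but for the curl computation it is cleaner to just take the zero extension, which lies in $L^\infty$ with compact support, so $\widehat A$ is real-analytic). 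Running exactly the argument reproduced in Section~\ref{sec_thm1} after \eqref{eq_recover_rot_2}: for $j\neq k$ choose $\mu=\mu(\xi,j,k)$ with $\mu_j=-\xi_k$, $\mu_k=\xi_j$ and all other entries zero, so that $\mu\cdot\xi=0$ and \eqref{eq_prop_magnetic} gives $\xi_j\widehat{A_k}(\xi)-\xi_k\widehat{A_j}(\xi)=0$ for all $\xi\in\R^n$. Taking inverse Fourier transforms yields $\partial_j A_k-\partial_k A_j=0$ in $\mathcal D'(\R^n)$ for all $j,k$, i.e. $A$ is curl-free on all of $\R^n$.

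Since the zero extension of $A$ is curl-free on the whole space $\R^n$, which is simply connected, I would then define
\[
\varphi(x)=\int_0^1 A(tx)\cdot x\,dt,\qquad x\in\R^n,
\]
and verify directly (differentiating under the integral sign, using $\partial_j A_k=\partial_k A_j$ exactly as in the standard Poincar\'e-lemma computation) that $\nabla\varphi=A$ on $\R^n$, hence in particular on $\overline\Omega$. One subtlety is regularity: $A$ has an $L^\infty$ (not continuous) curl-free extension off $\Omega$, so to differentiate under the integral sign one should either work with a mollified $A_\varepsilon=A*\rho_\varepsilon$ — for which the Poincar\'e formula gives a genuine $C^\infty$ potential $\varphi_\varepsilon$ with $\nabla\varphi_\varepsilon=A_\varepsilon$, and then pass to the limit — or invoke the distributional Poincar\'e lemma on $\R^n$ directly. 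Either way one gets $\varphi$ with $\nabla\varphi=A$; since $A\in W^{1,\infty}(\Omega,\C^n)$, i.e. $\nabla\varphi\in W^{1,\infty}(\Omega)$, the function $\varphi$ lies in $C^{1,1}(\overline\Omega)$ (using that $W^{1,\infty}$ on a bounded Lipschitz domain coincides with Lipschitz continuity), which is the asserted conclusion.

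The main point to be careful about — really the only thing beyond bookkeeping — is the regularity/approximation step: the hypothesis is only used via the zero extension, whose curl vanishes as a distribution on $\R^n$ but which is merely $L^\infty$, so the primitive must be built for the mollifications $A_\varepsilon$ and then one must check that $\varphi_\varepsilon\to\varphi$ with $\nabla\varphi=A$ in $\Omega$, and that the resulting $\varphi$ inherits $C^{1,1}(\overline\Omega)$ regularity on the original domain. The emphasis that \emph{no topological hypothesis} on $\Omega$ is needed is precisely because one passes through the convex (hence star-shaped) set $\R^n$ — or any ball containing $\overline\Omega$ — rather than trying to integrate inside $\Omega$ itself; this is the conceptual content of the proposition, and the proof should make that reduction explicit before invoking the Poincar\'e lemma.
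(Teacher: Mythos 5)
Your argument is correct, but it diverges from the paper's proof at the construction of $\varphi$. Both proofs start from the same observation: \eqref{eq_prop_magnetic} says $\mu\cdot\widehat{\chi_\Omega A}(\xi)=0$ whenever $\mu\perp\xi$, i.e.\ $\widehat{\chi_\Omega A}(\xi)$ is parallel to $\xi$ for every $\xi$. You recast this as $\xi_j\widehat{\chi_\Omega A_k}-\xi_k\widehat{\chi_\Omega A_j}=0$, take inverse Fourier transforms to get $\partial_j(\chi_\Omega A_k)-\partial_k(\chi_\Omega A_j)=0$ in $\mathcal D'(\R^n)$, and then invoke the Poincar\'e lemma on the star-shaped set $\R^n$ via the homotopy formula, mollifying to deal with the merely $L^\infty$ extension. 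The paper's proof (following Eskin and Ralston) stays entirely in Fourier space: it writes $\widehat{\chi_\Omega A}(\xi)=\alpha(\xi)\xi$ with $\alpha(\xi)=\widehat{\chi_\Omega A}(\xi)\cdot\xi/|\xi|^2$, verifies $\alpha\in L^\infty(\R^n)$ by noting $\widehat{\chi_\Omega A}(0)=0$ and that $\widehat{\chi_\Omega A}$ is smooth and bounded, and then sets $\tilde\varphi=\mathcal F^{-1}(\alpha)\in\mathcal S'(\R^n)$, so that $\chi_\Omega A_j=D_j\tilde\varphi$ in $\R^n$ is immediate from the identity $\widehat{\chi_\Omega A}(\xi)=\alpha(\xi)\xi$; the regularity $\varphi\in C^{1,1}(\overline\Omega)$ then follows as in your approach, with the paper citing H\"ormander's Theorems 4.5.12 and 3.1.7 rather than the Lipschitz--$W^{1,\infty}$ equivalence you appeal to. The paper's route avoids the mollification and passage-to-the-limit argument you rightly identify as the one delicate step, at the cost of a short check that $\alpha$ is bounded near $\xi=0$; your route trades that Fourier-multiplier bookkeeping for the geometric fact that a curl-free distribution on the simply connected set $\R^n$ is exact, which makes the role of the extension off $\Omega$ conceptually transparent.
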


 \begin{proof}
 
 We start by following an argument from  \cite{EskRal_1995}.
 Let $\chi_\Omega$ be the characteristic function of $\Omega$. Then \eqref{eq_prop_magnetic} can be written as follows, 
 \begin{equation}
\label{eq_prop_magnetic_1}
 \mu\cdot \hat{\chi_\Omega A }(\xi)=0, \quad \textrm{for all } \xi,\mu\in \R^n,\ \xi\cdot\mu=0.
 \end{equation}
 For any vector $\xi\in\R^n$,  we have the following decomposition,
 \[
 \hat{\chi_\Omega A }(\xi)=\hat{\chi_\Omega A }_\xi(\xi)+ \hat{\chi_\Omega A }_{\perp}(\xi),
 \]
 where $\Re\hat{\chi_\Omega A }_\xi(\xi), \Im \hat{\chi_\Omega A }_\xi(\xi)$ are multiples of $\xi$,  and $\Re\hat{\chi_\Omega A }_{\perp}(\xi), \Im\hat{\chi_\Omega A }_{\perp}(\xi)$ are orthogonal to $\xi$.  It follows from \eqref{eq_prop_magnetic_1} that $\hat{\chi_\Omega A }_{\perp}(\xi)=0$, and therefore, 
 \begin{equation}
\label{eq_prop_magnetic_2}
 \hat{\chi_\Omega A }(\xi)=\alpha(\xi)\xi, \quad \alpha(\xi)= \frac{\hat{\chi_\Omega A }(\xi)\cdot\xi}{|\xi|^2}. 
 \end{equation}
 Moreover, \eqref{eq_prop_magnetic_1} implies that $\hat{\chi_\Omega A }(0)=0$. Since $\chi_\Omega A\in L^1(\R^n)\cap \mathcal{E}'(\overline{\Omega})$, we conclude that $\hat{\chi_\Omega A }(\xi)\in L^\infty(\R^n)\cap C^\infty(\R^n)$. Hence, $\alpha\in L^\infty(\R^n)$.  
 Let 
 \[
\tilde \varphi=\mathcal{F}^{-1}(\alpha(\xi))\in \mathcal{S}'(\R^n)
 \]
 be the inverse Fourier transform of $\alpha$. 
 Then \eqref{eq_prop_magnetic_2} implies that 
$ \chi_\Omega A_j=D_j\tilde \varphi $ in  $\R^n$,  in the sense of distribution theory, and  therefore, 
\[
A_j=D_j\tilde \varphi\quad \textrm{in}\quad  \Omega.
\]
Setting $\varphi=i^{-1}\tilde \varphi$,  we should check that $\varphi\in C^{1,1}(\overline{\Omega})$.   As $D_j\varphi\in W^{1,\infty}(\Omega)= C^{0,1}(\overline{\Omega})$, $j=1,\dots,n$, 
by \cite[Theorem 4.5.12 and Theorem 3.1.7]{Horm_book_1} it follows that $\varphi\in C^{1,1}(\overline{\Omega})$. The proof is complete.

 \end{proof}

\end{appendix}

\section*{Acknowledgements}  

The research of K.K. is supported by the
Academy of Finland (project 125599).  The research of M.L. 
is partially supported 
 by the Academy of Finland Center of Excellence programme 213476. The research of
G.U. is partially supported by the National Science Foundation.

\end{document}